\documentclass[12pt]{amsart}

\theoremstyle{plain}
\newtheorem{thm}{Theorem}[section]

\newtheorem{lem}[thm]{Lemma}
\newtheorem{pro}[thm]{Proposition}

\theoremstyle{definition}
\newtheorem{rem}[thm]{Remark}
\newtheorem{defin}[thm]{Definition}

\newtheorem*{2loc}{Theorem 4.4}
\newtheorem*{2glob}{Theorem 4.9}
\newtheorem*{conj}{Conjecture}

\begin{document}

\title[Sections of Serre fibrations with 2-manifold fibers]
{Sections of Serre fibrations with 2-manifold fibers}

\author{N. Brodsky}
\address{University of Tennessee, Knoxville, TN 37996, USA}
\email{brodskiy@math.utk.edu}

\author{A. Chigogidze}
\address{University of North Carolina at Greensboro, Greensboro, NC
27402, USA} \email{chigogidze@uncg.edu}

\author{E.V.~\v{S}\v{c}epin}
\address{Steklov Institute of Mathematics,
Russian Academy of Sciences, Moscow 117966, Russia}
\email{scepin@mi.ras.ru}

\keywords{Serre fibration; section; selection; approximation.}
\subjclass{Primary: 57N05, 57N10; Secondary: 54C65.}

\begin{abstract}
It was proved by H. Whitney in 1933 that a Serre
fibration of compact metric spaces admits a global section
provided every fiber is homeomorphic to the unit interval [0,1].
Results of this paper extend Whitney theorem to the
case when all fibers are homeomorphic to a given compact
two-dimensional manifold.
\end{abstract}

\maketitle

\section{Introduction}\label{S:intro}

The following problem is one of the central problems in geometric
topology~\cite{DS}. Let $p\colon E\to B$ be a Serre fibration of
separable metric spaces. Suppose that the space $B$ is locally
$n$-connected and all fibers of $p$ are homeomorphic to a given
$n$-dimensional manifold $M^n$. Is $p$ a locally trivial
fibration?

For $n=1$ an affirmative answer to this problem follows from
results of H.~Whitney~\cite{Whitney}.

\begin{conj}[\v{S}\v{c}epin]
A Serre fibration with a locally arcwise connected metric base is
locally trivial if every fiber of this fibration is homeomorphic
to a given manifold $M^n$ of dimension $n \le 4$.
\end{conj}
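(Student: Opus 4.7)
The plan is to deduce local triviality from the existence of sections of auxiliary Serre fibrations built out of $p$, and then to assemble these sections into a local trivialization by induction over a handle (or CW) decomposition of $M^n$. Fix a base point $b_0\in B$ with fiber $F_0=p^{-1}(b_0)\cong M^n$, and aim to produce a neighborhood $U$ of $b_0$ together with a fiberwise homeomorphism $\Phi\colon U\times M^n\to p^{-1}(U)$. Since the base is locally arcwise connected, we may shrink $U$ to a ``nice'' neighborhood inside which we have enough path data to transport structure between fibers.

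First, I would handle the $0$-skeleton: choose finitely many points $x_1,\dots,x_k\in M^n$ (the vertices of a triangulation of $M^n$) and construct continuous sections $s_i\colon U\to p^{-1}(U)$ with $s_i(b_0)$ matching $x_i$ under our identification $F_0\cong M^n$. Existence of such sections is the local section problem for $p$ itself, which for $n=1$ is Whitney's theorem and for $n=2$ is Theorem~4.4 of the paper. For general $n\le 4$ this step requires a local selection theorem whose proof uses that the fibers are locally contractible and that the base is locally arcwise connected. Next, I would extend inductively over the $j$-cells: given that $\Phi$ has been constructed on the $(j-1)$-skeleton in each fiber, the obstruction to extending over a $j$-cell $\sigma$ lives in sections of a Serre fibration whose fiber is the space $\mathrm{Emb}(D^j,M^n)$ of embeddings extending a prescribed boundary sphere. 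Applying a selection/approximation theorem for these auxiliary fibrations (with appropriate control to keep the resulting cellwise maps mutually compatible) yields the next skeletal extension.

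The crux of the argument is the combination of two ingredients: (i) a sufficiently strong section theorem (like Theorems~4.4 and~4.9) for \emph{each} auxiliary fibration arising at each skeletal stage, and (ii) control over the mapping class group and component structure of $\mathrm{Homeo}(M^n)$ so that the inductively assembled $\Phi$ is globally a homeomorphism on each fiber rather than just on the skeleton. For $n=1$ both are trivial. For $n=2$ they are provided by the surface classification, the Earle--Eells theorem on the contractibility of components of $\mathrm{Diff}(M^2)$, and the paper's own global section theorem; this gives the conjecture for $n\le 2$. For $n=3$ the analogous ingredients require the Hatcher--Ivanov--Gabai theorems on $\mathrm{Diff}(M^3)$ together with the geometrization theorem.

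The hard part, and the reason the statement is a conjecture rather than a theorem, is the $n=4$ case: here $\mathrm{Homeo}(M^4)$ is not well enough understood to run the skeletal induction past the middle dimension, and selection theorems for spaces of embedded $2$-disks in a $4$-manifold run into essentially the same obstructions that make smoothing and $h$-cobordism fail topologically in dimension four. I would expect any actual proof to require either a fundamentally new approach avoiding the handle induction at the top cell, or substantial new input about topological embedding spaces in dimension four.
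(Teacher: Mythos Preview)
The paper does not prove this statement: it is stated as \v{S}\v{c}epin's \emph{Conjecture} and left open. The paper's main contributions (Theorems~4.4 and~4.9) are section theorems for Serre fibrations with $2$-manifold fibers over an ANR base, described in the introduction as ``the first step toward proving \v{S}\v{c}epin's Conjecture in dimension $n=2$'', not as a proof of local triviality. So there is no proof in the paper to compare your proposal against.

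Your proposal is likewise not a proof but a program, and it has a concrete gap already at the point where you claim success. You invoke Theorems~4.4 and~4.9 to furnish sections of the auxiliary fibrations in your handle induction, but those theorems apply only to Serre fibrations whose fibers are compact $2$-manifolds. The auxiliary fibrations you actually need at the inductive step---fibrations with fiber $\mathrm{Emb}(D^j,M^n)$ rel a prescribed boundary sphere---have infinite-dimensional embedding spaces as fibers, not surfaces. Nothing in the paper (and nothing you cite) supplies the selection theorem assumed in your item~(i) for those fibrations. Hence your assertion that ``this gives the conjecture for $n\le 2$'' is unsupported: the paper's results produce continuously varying \emph{points} in the fibers, not continuously varying \emph{coordinate charts}, and passing from the former to the latter is exactly the content of the conjecture. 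Note too that the conjecture is stated for a locally arcwise connected metric base, while Theorems~4.4 and~4.9 require $B\in ANR$; your outline silently assumes the stronger hypothesis. Your assessment that $n=4$ is out of reach is reasonable, but the sketch does not in fact close $n=2$ or $n=3$ either.
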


In dimension $n=1$ \v{S}\v{c}epin's Conjecture is valid even for
non-compact fibers~\cite{RSS}. \v{S}\v{c}epin also proved that the positive
solution of the Conjecture in dimension $n$ implies positive
solutions of both the CE-problem and the Homeomorphism Group problem in
dimension $n$~\cite{S, DS}. Since the $CE$-problem was solved in 
negative by A.N.~Dranishnikov, it follows that there are dimensional
restrictions in \v{S}\v{c}epin's Conjecture.

The first step toward proving \v{S}\v{c}epin's Conjecture in
dimension $n=2$ was made in~\cite{B}, where existence of local
sections of the fibration was proved under the assumption that the base space is an
$ANR$. We improve this result in two directions. First we prove that any
section of the fibration over closed subset $A$ of the base space can be
extended to a section over some neighborhood of $A$. Secondly, we prove
a theorem on global sections of the fibration.

\begin{2loc}
Let $p\colon E\to B$ be a Serre fibration of locally connected
compacta with all fibers homeomorphic to given
two-dimensional manifold. If $B\in ANR$, then any section of $p$
over a closed subset $A\subset B$ can be extended to a section of
$p$ over a neighborhood of $A$.
\end{2loc}

\begin{2glob}
Let $p\colon E\to B$ be a Serre fibration of locally connected
compactum $E$ onto an $ANR$-compactum $B$ with all fibers
homeomorphic to given two-dimensional manifold $M$. If $M$ is
not homeomorphic to the sphere or the projective plane, then $p$
admits a global section provided one of the following conditions
holds:
\begin{itemize}
\item[(a)] $\pi_1(M)$ is abelian and $H^2(B;\pi_1(F_b))=0$
\item[(b)] $\pi_1(M)$ is non-abelian, $M$ is not homeomorphic to the Klein bottle
   and $\pi_1(B)=0$.
\item[(c)] $M$ is homeomorphic to the Klein bottle and $\pi_1(B)=\pi_2(B)=0$.
\end{itemize}
\end{2glob}

Our strategy of constructing a section of a Serre fibration is as
follows. We consider the inverse (multivalued) mapping and find
its compact submapping admitting continuous approximations. Then
we take very close continuous approximation and use it to find
again a compact submapping with small diameters of fibers
admitting continuous approximations. When we continue this process
we get a sequence of compact submappings with diameters of fibers
tending to zero. This sequence will converge to the desired
singlevalued submapping (selection).

Section~\ref{S:approx} is devoted to continuous approximations of
multivalued maps. We prove filtered finite dimensional
approximation theorem (Theorem~\ref{thmappr}) and then apply it in
a usual way (compare with~\cite{GGK}) to prove an approximation theorem
for maps of ANR-spaces. Since we are going to use singular
filtrations of multivalued maps instead of usual filtrations, our
Theorem~\ref{thmappr} generalizes the Filtered approximation
theorem proved in~\cite{SB}. But the proof of our singular version
of filtered approximation theorem in full generality requires a
lot of technical details. Consequently we decided to present only
the version that we need -- for compact maps of metric
spaces.

Let us recall some definitions and introduce our notation.
All spaces will be separable metrizable.
By a mapping we understand a continuous single-valued mapping.
We equip the product $X\times Y$ with the metric
$${\rm dist}_{X\times Y}((x,y),(x',y'))={\rm dist}_X(x,x')+{\rm dist}_Y(y,y').$$
By $O(x,\varepsilon)$ we denote the open $\varepsilon$-neighborhood of the point $x$.

A multivalued mapping $F\colon X\to Y$ is called a {\it submapping}
(or {\it selection}) of a multivalued mapping $G\colon X\to Y$ if
$F(x)\subset G(x)$ for every $x\in X$. The {\it gauge} of a
multivalued mapping $F\colon X\to Y$ is defined as ${\rm cal}
(F)=\sup\{{\rm diam} F(x)\mid x\in X\}$. The {\it graph} of
multivalued mapping $F\colon X\to Y$ is the subset
$\Gamma_F=\{(x,y)\in X\times Y\mid y\in F(x)\}$ of the product
$X\times Y$. For arbitrary subset $\mathcal U\subset X\times Y$
denote by $\mathcal U(x)$ the subset $pr_Y(\mathcal
U\cap(\{x\}\times Y))$ of $Y$. Then for the graph $\Gamma_F$ we
have $\Gamma_F(x)=F(x)$.

A multivalued mapping $G\colon X\to Y$ is called
{\it complete} if all sets $\{x\}\times G(x)$ are closed
with respect to some $G_\delta$-set $S\subset X\times Y$
containing the graph of this mapping.
A multivalued mapping $F\colon X\to Y$ is called
{\it upper semicontinuous} if for any open set
$U\subset Y$ the set $\{x\in X\mid F(x)\subset U\}$ is open in $X$.
A {\it compact} mapping is an upper semicontinuous
multivalued mapping with compact images of points.

An increasing sequence (finite or countably infinite) of subspaces
$$ Z_0\subset Z_1\subset Z_2\subset\dots\subset Z $$
is called a {\it  filtration}  of space $Z$. A sequence of
multivalued mappings $\{F_k\colon X\to Y\}$ is called a {\it
filtration of multivalued mapping} $F\colon X\to Y$ if for any
$x\in X$, $\{F_k(x)\}$ is a filtration of $F(x)$.

We say that a filtration of multivalued mappings $G_i\colon X\to Y$
is {\it complete} (resp. {\it compact}) if every mapping $G_i$
is complete (resp. compact).

\section{Local properties of multivalued mappings}\label{S:local}

Let $\gamma$ be a property of a topological space such that every
open subspace inherits this property: if a space $X$ satisfies
$\gamma$, then any open subspace $U\subset X$ also satisfies
$\gamma$. We say that a space $Z$ satisfies $\gamma$ {\it locally}
if every point $z\in Z$ has a neighborhood with this property.

For a multivalued map $F\colon X\to Y$ to satisfy $\gamma$ locally
we not only require that every point-image $F(x)$ has this property locally,
but for any points $x\in X$ and $y\in F(x)$
there exist a neighborhood $W$ of $y$ in $Y$ and $U$ of $x$ in $X$
such that $W\cap F(x')$ satisfies $\gamma$ for every point $x'\in U$.
And we use the word "equi" for local properties of multivalued maps.

The first example of such a property is the local compactness.

\begin{defin}
A space $X$ is called {\it locally compact} if every point $x\in X$
has a compact neighborhood. We say that a multivalued map $F\colon X\to Y$ is {\it equi locally compact}
if for any points $x\in X$ and $y\in F(x)$
there exists a neighborhood $W$ of $y$ in $Y$ and $U$ of $x$ in $X$
such that $W\cap F(x')$ is compact for every point $x'\in U$.
\end{defin}

Another local property we are going to use is the hereditary
asphericity. Recall that a compactum $K$ is called {\it
approximately aspherical} if for any (equivalently, for some)
embedding of $K$ into an ANR-space $Y$ every neighborhood $U$ of
$K$ in $Y$ contains a neighborhood $V$ with the following
property: any mapping of the sphere $S^n$ into $V$ is
homotopically trivial in $U$ provided $n\ge 2$.

\begin{defin}
We call a space $Z$ {\it hereditarily aspherical}
if any compactum $K\subset Z$ is approximately aspherical.

A space $Z$ is said to be {\it locally hereditarily aspherical}
if any point $z\in Z$ has a hereditarily aspheric neighborhood.
\end{defin}

It is easy to prove that the 2-dimensional Euclidean space is
hereditary aspherical. Note that any 2-dimensional manifold is
locally hereditarily aspherical.

\begin{defin}
We say that a multivalued map $F\colon X\to Y$ is {\it equi
locally hereditarily aspherical} if for any points $x\in X$ and
$y\in F(x)$ there exists a neighborhood $W$ of $y$ in $Y$ and $U$
of $x$ in $X$ such that $W\cap F(x')$ is hereditarily aspherical for
every point $x'\in U$.
\end{defin}

Now we consider different properties of pairs of spaces and define the
corresponding local properties for spaces and multivalued maps.
We follow definitions and notations from~\cite{DM}.

\begin{defin}
An ordering $\alpha$ of subsets of a space $Y$ is {\it proper} provided:
\begin{itemize}
\item[(a)] If $W\alpha V$, then $W\subset V$;
\item[(b)] If $W\subset V$, and $V\alpha R$, then $W\alpha R$;
\item[(c)] If $W\alpha V$, and $V\subset R$, then $W\alpha R$.
\end{itemize}
\end{defin}

\begin{defin}
Let $\alpha$ be a proper ordering.
\begin{itemize}
\item[(a)]
A space $Y$ is {\it locally of type $\alpha$} if, whenever
$y\in Y$ and $V$ is a neighbourhood of $y$, then there a neighbourhood
$W$ of $y$ such that $W\alpha V$.
\item[(b)]
A multivalued mapping $F\colon X\to Y$ is {\it lower $\alpha$-continuous}
if for any points $x\in X$ and $y\in F(x)$ and for any neighbourhood $V$
of $y$ in $Y$ there exist neighbourhoods $W$ of $y$ in $Y$
and $U$ of $x$ in $X$ such that $(W\cap F(x'))\alpha(V\cap F(x'))$
provided $x'\in U$.
\end{itemize}
\end{defin}

For example, if $W\alpha V$ means that $W$ is contractible in
$V$, then locally of type $\alpha$ means locally contractible.
Another topological property which arises in this manner
is $LC^n$ (where $W\alpha V$ means that every continuous mapping
of the $n$-sphere into $W$ is homotopic to a constant mapping in $V$)
and the corresponding lower $\alpha$-continuity of multivalued map
is called {\it lower $(n+1)$-continuity}.
For the special case $n=-1$ the property $W\alpha V$ means that $V$ is
non-empty, and lower $\alpha$-continuity is the {\it lower semicontinuity}.

The following result is weaker than Lemma~3.5 from~\cite{BCK}. We
will use it with different properties $\alpha$ in
Section~\ref{33}.

\begin{lem}\label{L32}
Let a lower $\alpha$-continuous mapping $\Phi\colon X\to Y$ of
compactum $X$ to a metric space $Y$ contains a compact submapping
$F$. Then for any $\varepsilon>0$ there exists a positive number
$\delta$ such that for every point $(x,y)\in O(\Gamma_F,\delta)$
we have $(O(y,\delta)\cap\Phi(x))\ \alpha\
(O(y,\varepsilon)\cap\Phi(x))$.
\end{lem}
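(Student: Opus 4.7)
The plan is to apply lower $\alpha$-continuity pointwise on $\Gamma_F$ and then convert the pointwise information into a uniform estimate by a compactness argument. Since $X$ is a compactum and $F$ is a compact multivalued mapping, the graph $\Gamma_F$ is a compact subset of the metric space $X\times Y$, so every open cover of $\Gamma_F$ admits a finite subcover.

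Fix $\varepsilon>0$. For each $(x_0,y_0)\in\Gamma_F$, since $y_0\in F(x_0)\subset\Phi(x_0)$, I would apply the lower $\alpha$-continuity of $\Phi$ at $(x_0,y_0)$ with target neighborhood $V=O(y_0,\varepsilon/2)$. This produces neighborhoods $U$ of $x_0$ and $W$ of $y_0$ with $(W\cap\Phi(x'))\ \alpha\ (O(y_0,\varepsilon/2)\cap\Phi(x'))$ for every $x'\in U$. Then I would pick a radius $\eta_{x_0,y_0}\le\varepsilon/2$ with $O(x_0,\eta_{x_0,y_0})\subset U$ and $O(y_0,\eta_{x_0,y_0})\subset W$.

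The next step is a transfer argument: I want to show the local relation persists when the base point $(x_0,y_0)$ is replaced by an arbitrary nearby pair $(x,y)$. Suppose $d(x,x_0)+d(y,y_0)<\eta_{x_0,y_0}/2$ and $0<\delta'\le\eta_{x_0,y_0}/2$. The triangle inequality gives $O(y,\delta')\subset O(y_0,\eta_{x_0,y_0})\subset W$, so the inclusion $O(y,\delta')\cap\Phi(x)\subset W\cap\Phi(x)$ and property (b) of a proper ordering yield
\[(O(y,\delta')\cap\Phi(x))\ \alpha\ (O(y_0,\varepsilon/2)\cap\Phi(x)).\]
Similarly $d(y,y_0)<\varepsilon/2$ implies $O(y_0,\varepsilon/2)\subset O(y,\varepsilon)$, so property (c) upgrades this to
\[(O(y,\delta')\cap\Phi(x))\ \alpha\ (O(y,\varepsilon)\cap\Phi(x)),\]
which is the conclusion of the lemma at the point $(x,y)$.

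Finally, I would cover $\Gamma_F$ by the open balls (with respect to the sum metric on $X\times Y$) of radius $\eta_{x_0,y_0}/4$ centered at all points of $\Gamma_F$, extract a finite subcover at centers $(x_1,y_1),\dots,(x_n,y_n)$, and set $\delta:=\min_i\eta_{x_i,y_i}/4$. For any $(x,y)\in O(\Gamma_F,\delta)$, I would pick $(x_0,y_0)\in\Gamma_F$ at distance less than $\delta$, choose an index $i$ with $(x_0,y_0)$ in the $i$-th ball, and observe that the triangle inequality gives $d(x,x_i)+d(y,y_i)<\delta+\eta_{x_i,y_i}/4\le\eta_{x_i,y_i}/2$. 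Applying the transfer step at $(x_i,y_i)$ with $\delta':=\delta$ finishes the proof. The only real obstacle is the bookkeeping linking the radii used to neighborhood-cover $\Gamma_F$ with the inner radius $\delta$ in $Y$; the monotonicity axioms (b) and (c) of a proper ordering are precisely what allow the base point $y_0$ to be moved to an arbitrary nearby $y$ without losing the $\alpha$-relation, and uniform halving of the $\eta_{x_i,y_i}$ handles the Lebesgue-number-style step.
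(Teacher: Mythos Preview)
Your argument is correct. The paper does not supply its own proof of this lemma: it simply records the statement as a weakening of Lemma~3.5 from~\cite{BCK} and moves on. Your compactness argument on $\Gamma_F$ together with the monotonicity axioms (b) and (c) of a proper ordering is exactly the natural way to produce the uniform $\delta$, and every step checks out (in particular, compactness of $\Gamma_F$ follows from $X$ being compact and $F$ being upper semicontinuous with compact values, so $F(X)$ is compact and $\Gamma_F$ is closed in $X\times F(X)$). Thus you have given a self-contained proof where the paper only provides a citation.
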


In order to use results from~\cite{SB} we need a local property
called polyhedral $n$-connectedness.
A pair of spaces $V\subset U$ is called {\it polyhedrally $n$-connected}
if for any finite $n$-dimensional polyhedron $M$ and its
closed subpolyhedron $A$ any mapping of $A$ into $V$ can be extended
to a map of $M$ into $U$.
Note that for spaces being locally polyhedrally $n$-connected
is equivalent to be $LC^{n-1}$ (it follows from Lemma~\ref{lempolcont}).
The corresponding local property of multivalued maps
is called {\it polyhedral lower $n$-continuity}.

\begin{lem} \label{lempolcont} %???
Any lower $n$-continuous multivalued mapping is
lower polyhedrally $n$-continuous.
\end{lem}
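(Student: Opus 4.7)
The proof strategy is iterative obstruction theory. Interpret lower $n$-continuity cumulatively, so that for each $0\le k\le n$ the corresponding $LC^{k-1}$ null-homotopy property (with the convention that $LC^{-1}$ means non-emptiness) is available in the lower sense; this is the standard convention for Kuratowski-type local connectivity hypotheses. Under this convention, the plan is to fabricate, around a given pair $(x,y)$ with $y\in F(x)$ and a prescribed neighbourhood $V$ of $y$, a nested tower of neighbourhoods of $y$ together with a single neighbourhood $U$ of $x$ that controls extensions of polyhedral maps into $F(x')$ one skeleton at a time.

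Concretely, set $V_n:=V$ and apply lower $n$-continuity at $(x,y)$ to produce a neighbourhood $V_{n-1}\subset V_n$ of $y$ and a neighbourhood $U_n$ of $x$ for which every map $S^{n-1}\to V_{n-1}\cap F(x')$ is null-homotopic in $V_n\cap F(x')$ whenever $x'\in U_n$. Iterating downwards using lower $(n-1)$-, $(n-2)$-, $\dots$, $1$-continuity yields a nested chain
\[
V_0\subset V_1\subset\cdots\subset V_n=V
\]
of neighbourhoods of $y$ and neighbourhoods $U_{n-1},\dots,U_1$ of $x$ with the analogous null-homotopy property at each successive level. One final use of lower $0$-continuity (i.e.\ lower semicontinuity) furnishes a neighbourhood $U_0$ of $x$ on which $V_0\cap F(x')$ is non-empty. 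Put $W:=V_0$ and $U:=\bigcap_{k=0}^{n}U_k$.

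Given $x'\in U$, an $n$-dimensional polyhedron $M$ with closed subpolyhedron $A\subset M$, and a map $f\colon A\to W\cap F(x')$, triangulate $(M,A)$ so that $A$ is a subcomplex and extend $f$ skeleton by skeleton. First extend to $A\cup M^{(0)}$ by sending each new vertex to any point of $V_0\cap F(x')$. Supposing an extension $A\cup M^{(k)}\to V_k\cap F(x')$ has already been produced for some $0\le k<n$, each $(k{+}1)$-cell $\sigma$ of $M\setminus A$ restricts on its boundary $\partial\sigma\cong S^{k}$ to a map into $V_k\cap F(x')$, which by construction extends continuously over $\sigma$ into $V_{k+1}\cap F(x')$. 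Piecing these extensions together yields a continuous map $A\cup M^{(k+1)}\to V_{k+1}\cap F(x')$. After $n$ such steps we obtain the desired extension $M\to V\cap F(x')$, which verifies that the pair $\bigl(W\cap F(x'),\,V\cap F(x')\bigr)$ is polyhedrally $n$-connected for every $x'\in U$.

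The only real technical subtlety is ensuring that a single neighbourhood $U$ of $x$ governs all $n+1$ levels of the induction simultaneously; this is handled by the finite intersection $\bigcap_{k=0}^{n}U_k$. Once the tower is in place, the skeletal extension is purely combinatorial and proceeds exactly as in the classical Kuratowski--Dugundji proof that $LC^{n-1}$ implies polyhedral $n$-connectivity, transplanted fibrewise to the pair $(W\cap F(x'),V\cap F(x'))$.
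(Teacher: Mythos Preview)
Your proposal is correct and follows essentially the same route as the paper: both build a nested tower of neighbourhoods using the lower $k$-continuity hypotheses for $k=n,n-1,\dots,0$ and then extend a partial polyhedral map through the tower one skeleton at a time. The paper phrases the core step as the auxiliary fact that in a connected filtration $Z_0\subset Z_1\subset\cdots\subset Z_n$ the pair $Z_0\subset Z_n$ is polyhedrally $n$-connected, but the substance of the skeletal extension argument is identical to yours, only stated far more tersely and without spelling out the construction of $W$ and $U$.
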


\begin{proof}
The proof easely follows from the fact that in a connected filtration
$Z_0\subset Z_1\subset \dots\subset Z_n$ of spaces the pair $Z_0\subset Z_n$
is polyhedrally $n$-connected.
Given a mapping $f\colon A\to Z_0$ of subpolyhedron $A$ of $n$-dimensional
polyhedron $P$, we extend it successively over skeleta $P^{(k)}$
of $P$ such that the image of $k$-dimensional skeleton $P^{(k)}$
is contained in $Z_k$. Resulting map gives us an extension
$\widetilde f\colon P\to Z_n$ of $f$ which proves that the pair $Z_0\subset Z_n$
is polyhedrally $n$-connected.
\end{proof}

A filtration of multivalued maps $\{F_i\}$ is called
{\it polyhedrally connected} if every pair $F_{i-1}(x)\subset F_{i}(x)$
is polyhedrally $i$-connected.
A filtration $\{F_i\}$ is called {\it lower continuous}
if for any $i$ the mapping $F_i$ is lower $i$-continuous.

\begin{lem} \label{lemma2fibrprop}
If $p\colon E\to B$ is a Serre fibration of $LC^0$-compacta
with fibers homeomorphic to a given 2-dimensional compact manifold, then
the multivalued mapping $p^{-1}\colon B\to E$ is
\begin{itemize}
\item equi locally hereditarily aspherical
\item polyhedrally lower $2$-continuous
\end{itemize}
\end{lem}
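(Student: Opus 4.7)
\emph{Plan.} Both assertions will follow from a single local construction. Given $b\in B$ and $e\in p^{-1}(b)$, and given an arbitrary open neighborhood $V$ of $e$ in $E$, the plan is to produce open sets $W\subset\widetilde V\subset V$ with $e\in W$ and a neighborhood $U$ of $b$ in $B$ such that for every $b'\in U$ there is an open disk $D_{b'}$ in the $2$-manifold fiber $p^{-1}(b')$ with
$$W\cap p^{-1}(b') \subset D_{b'} \subset \widetilde V\cap p^{-1}(b').$$
The starting point is a pair of nested open disks $W_0\subset V_0$ around $e$ in $p^{-1}(b)$, each homeomorphic to an open disk in $\mathbb R^2$ and both contained in $V$; such disks exist because $p^{-1}(b)$ is a $2$-manifold. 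The Serre fibration property lets us lift short paths from $b$ to $b'$ to paths in $E$, and this lifting transports the chart $V_0$ to a chart $D_{b'}\subset p^{-1}(b')$. Combined with compactness of $E$ and local connectedness of the base, this transport can be made uniform in $b'$ across some neighborhood $U$ of $b$, and the resulting disks arranged to lie in a common $E$-open set $\widetilde V\subset V$; taking $W$ to be the analogous extension of $W_0$ then yields the displayed inclusions.

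\emph{Derivation of both properties.} Once the construction is in place, equi local hereditary asphericity is immediate: $W\cap p^{-1}(b')$ is an open subset of the disk $D_{b'}$, which is itself homeomorphic to an open subset of $\mathbb R^2$; any such subset is hereditarily aspherical by the observation, recorded in the text, that $\mathbb R^2$ is hereditarily aspherical. For the second property, Lemma~\ref{lempolcont} reduces polyhedral lower $2$-continuity to lower $2$-continuity; and any map $S^1\to W\cap p^{-1}(b')$ factors through the simply connected disk $D_{b'}\subset V\cap p^{-1}(b')$, so it is null-homotopic there. (The lower $0$- and $1$-continuity needed for lower $2$-continuity fall out of the same construction: $D_{b'}$ is connected, so $W\cap p^{-1}(b')$ sits inside a single path-component of $V\cap p^{-1}(b')$.)

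\emph{Main obstacle.} The heart of the argument is the construction itself. An arbitrary small $E$-neighborhood of $e$ need not intersect nearby fibers in a disk-like set: \emph{a priori} it could meet $p^{-1}(b')$ in several components, or in a subset that wraps topologically around the fiber. Ruling out such behaviour is where the Serre fibration hypothesis is used in an essential way, together with the $LC^0$ assumption on $E$ and the compactness of the fibers. The construction is expected to parallel the local section argument of~\cite{B}: one lifts a coordinate chart in $p^{-1}(b)$ along short paths in $B$ and glues the resulting charts coherently using compactness and local connectedness, so that the charts $D_{b'}$ fit together into an $E$-open tube around $V_0$.
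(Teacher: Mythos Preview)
Your plan is considerably more elaborate than what the paper actually does, and the part you label the ``main obstacle'' is never resolved.

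\textbf{First property.} The paper observes that \emph{every open proper subset of a compact $2$-manifold is aspherical} (each component is a non-compact surface, hence a $K(\pi,1)$), so every compact proper subset is approximately aspherical. Thus one only needs a neighborhood $W$ of $e$ in $E$ small enough that $W\cap p^{-1}(b')$ is a \emph{proper} subset of the fiber for all $b'$ in some neighborhood $U$ of $b$; this is immediate from upper semicontinuity of $p^{-1}$, since fibers are $2$-manifolds and hence not singletons. No disk $D_{b'}$ is required: you are working much harder than necessary here.

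\textbf{Second property.} The paper simply invokes the $n$-regularity theorem of McAuley--Tulley~\cite{McAT} to obtain lower $2$-continuity of $p^{-1}$, and then Lemma~\ref{lempolcont}. Your disk construction, if it could be carried out, would in effect re-prove that regularity statement.

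\textbf{The gap.} Your sketch of the construction is not a proof, and the difficulty is real. The Serre lifting property lets you lift a path from $b$ to $b'$ starting at a single point $e$, or lift a homotopy of a chart $V_0\times[0,1]\to B$; but the time-$1$ map $V_0\to p^{-1}(b')$ produced by homotopy lifting is only continuous, not an embedding onto an open disk, and nothing guarantees that the resulting images $D_{b'}$ absorb the \emph{entire} intersection $W\cap p^{-1}(b')$ (which could \emph{a priori} have several components or nontrivial loops). Arranging all of this uniformly in $b'$ is precisely the content of an $n$-regularity theorem for Serre fibrations, and is dangerously close to asserting a form of local triviality---which, as the Introduction emphasizes, is the open problem one is ultimately aiming at. So either cite McAuley--Tulley as the paper does, or recognize that a genuine argument of comparable strength is needed; the present sketch does not supply one.
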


\begin{proof}
Since every open proper subset of a two-dimensional manifold is aspherical,
every compact proper subset of 2-manifold is approximately aspherical.
Therefore, the mapping $F$ is equi locally hereditarily aspherical.

It follows from a theorem of McAuley \cite{McAT} that the mapping $p^{-1}$ is
lower 2-continuous.
By Lemma~\ref{lempolcont}, the mapping $p^{-1}$ is
polyhedrally lower 2-continuous.
\end{proof}

\section{Singlevalued approximations}\label{S:approx}

\begin{defin}
A {\it singular pair} of spaces is a triple $(Z,\phi,Z')$
where $\phi\colon Z\to Z'$ is a mapping.

We say that a space $Z$ contains a {\it singular filtration
of spaces} if a finite sequence of pairs $\{(Z_i,\phi_i)\}_{i=0}^n$
is given where $Z_i$ is a space and $\phi_i\colon Z_i\to Z_{i+1}$ is a map
(we identify $Z_{n+1}$ with $Z$).
\end{defin}

For a multivalued map $F\colon X\to Y$ it is useful to consider its
{\it graph fibers} $\{x\}\times F(x)\subset \Gamma_F$
instead of usual fibers $F(x)\subset Y$.
While the graph fibers are always homeomorphic to the usual fibers,
different graph fibers do not intersect (the usual fibers
may intersect in $Y$). We denote the graph fiber of the map $F$
over a point $x\in X$ by $F^\Gamma(x)$.

To define the notion of singular filtration for multivalued maps
we introduce a notion of fiberwise transformation of multivalued maps.

\begin{defin}
For multivalued mappings $F$ and $G$ of a space $X$ a
{\it fiberwise transformation from $F$ to $G$}
is a continuous mapping $T\colon \Gamma_F\to \Gamma_G$
such that $T(F^\Gamma(x))\subset G^\Gamma(x)$ for every $x\in X$.

A {\it fiber} $T(x)$ of the fiberwise transformation $T$ over
the point $x\in X$ is a mapping $T(x)\colon F(x)\to G(x)$
determined by $T$.

We say that a multivalued mapping $F\colon X\to Y$ contains a
{\it singular filtration of multivalued maps}
if a finite sequence of pairs $\{(F_i,T_i)\}_{i=0}^n$
is given where $F_i\colon X\to Y_i$ is a multivalued mapping
and $T_i$ is a fiberwise transformation from $F_i$ to $F_{i+1}$
(we identify $F_{n+1}$ with $F$).
\end{defin}

To construct continuous approximations of multivalued maps we
need the notion of approximate asphericity.

\begin{defin}
A pair of compacta $K\subset K'$ is called
{\it approximately $n$-aspherical} if for any embedding of
$K'$ into $ANR$-space $Z$
for every neighborhood $U$ of $K'$ in $Z$ there exists
a neighborhood $V$ of $K$ such that any mapping
$f\colon S^n\to V$ is homotopically trivial in $U$.

A compactum $K$ is {\it approximately $n$-aspherical} if the pair
$K\subset K$ is approximately $n$-aspherical.
\end{defin}

The following is a singular version of approximate asphericity.

\begin{defin}
A singular pair of compacta $(K,\phi,K')$ is called
{\it approximately $n$-aspherical} if for any embeddings
$K\subset Z$ and $K'\subset Z'$ in $ANR$-spaces and for
any extension of $\phi$ to a map $\widetilde \phi\colon OK\to Z'$
of some neighborhood $OK$ of $K$ the following holds:
for every neighborhood $U$ of $K'$ in $Z'$ there exists
a neighborhood $V$ of $K$ in $OK$ such that for any mapping
$f\colon S^n\to V$ the spheroid $\widetilde\phi\circ f\colon S^n\to U$
is homotopically trivial in $U$.
\end{defin}

Following R.C.~Lacher~\cite{L}, one can prove that this notion does
not depend neither on the choices of $ANR$-spaces $Z$ and $Z'$
nor on the embeddings of $K$ and $K'$ into these spaces.

\begin{defin}
A singular filtration of compacta $\{(K_i,\phi_i)\}_{i=0}^n$
is called {\it approximately connected} if for every $i<n$
the singular pair $(K_i,\phi_i,K_{i+1})$ is approximately $i$-aspherical.
\end{defin}

Clearly, a singular pair of compacta $(K,\phi,K')$ is approximately
$n$-aspherical in either of the following three situations:
compactum $K$, compactum $K'$, or the pair $\phi(K)\subset K'$
is approximately $n$-aspherical.

\begin{defin}\label{appcon}
A singular filtration $\mathcal F=\{(F_i,T_i)\}_{i=0}^n$ of compact
mappings $F_i\colon X\to Y_i$ is said to be {\it approximately connected}
if for every point $x\in X$ the singular filtration of compacta
$\{(F_i(x),T_i(x))\}_{i=0}^n$ is approximately connected.

An approximately connected singular filtration
$\mathcal F=\{(F_i\colon X\to Y_i,T_i)\}_{i=0}^n$
is said to be {\it approximately $\infty$-connected}
if the mapping $F_n$ has approximately $k$-aspherical point-images
$F_n(x)$ for all $k\ge n$ and all $x\in X$.
\end{defin}

Note that if a singular filtration $\mathcal F=\{(F_i,T_i)\}_{i=0}^n$
is approximately $\infty$-connected, then the mapping $F_n$ contains
an approximately connected singular filtration of any given finite length.

We will reduce our study of singular filtrations to the study
of usual filtrations using the following cylinder construction.

\begin{defin}
For a continuous singlevalued mapping $f\colon X\to Y$ we define a
{\it cylinder} of $f$ denoted by ${\rm cyl} (f)$ as a space
obtained from the disjoint union of $X\times [0,1]$ and $Y$ by
identifying each $\{x\}\times \{1\}$ with $f(x)$.
\end{defin}

Note that the cylinder ${\rm cyl} (f)$ contains a homeomorphic
copy of $Y$ called the {\it bottom} of the cylinder, and a
homeomorphic copy of $X$ as $X\times \{0\}$ called the {\it top}
of the cylinder.

\begin{rem} \label{remretr}
There is a natural deformation retraction $r\colon {\rm cyl}(f)\to
Y$ onto the bottom $Y$. Clearly, the fiber of the mapping $r$ over
a point $y\in Y$ is either one point $\{y\}$ or a cone over the
set $f^{-1}(y)$. Therefore, if the map $f$ is proper, then $r$ is
$UV^\infty$-mapping.
\end{rem}

\begin{rem} \label{rememb}
Suppose that $X$ is embedded into Banach space $B_1$ and $Y$ is
embedded into Banach space $B_2$. Then we can naturally embed the
cylinder ${\rm cyl} (f)$ into the product $B_1\times \mathbb
R\times B_2$. The embedding is clearly defined on the top as
embedding into $B_1\times \{0\}\times\{0\}$ and on the bottom as
embedding into $\{0\}\times \{1\}\times B_2$. We extend these
embeddings to the whole cylinder by sending its point $\{x\}\times
\{t\}$ to the point $\{(1-t)\cdot x\}\times t\times \{t\cdot
f(x)\}$.
\end{rem}

\begin{lem} \label{lemasphcomp}
If a singular pair of compacta $(K,\phi,K')$ is approximately
$n$-aspherical, then the pair $K\subset {\rm cyl}(\phi)$ is
approximately $n$-aspherical.
\end{lem}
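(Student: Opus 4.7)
The plan is to exploit the deformation retraction $r\colon {\rm cyl}(\phi)\to K'$ from Remark~\ref{remretr}, together with the key observation that $r$ restricted to the top $K$ of the cylinder coincides with $\phi$. This lets us convert the singular approximate $n$-asphericity of $(K,\phi,K')$ into the ordinary approximate $n$-asphericity of the pair $K\subset {\rm cyl}(\phi)$.

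By Lacher's independence statement (invoked right after the singular definition), it suffices to check the conclusion for one convenient embedding. Using Remark~\ref{rememb} I would embed ${\rm cyl}(\phi)$ into the Banach space $Z=B_1\times\mathbb{R}\times B_2$, which is an ANR, and take $Z'=Z$ with $K'$ realized as the bottom of the cylinder. Given a neighborhood $U$ of ${\rm cyl}(\phi)$ in $Z$, the goal becomes producing a neighborhood $V$ of $K$ in $Z$ such that every map $f\colon S^n\to V$ is nullhomotopic in $U$.

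First, extend the deformation retraction homotopy $H\colon {\rm cyl}(\phi)\times I\to {\rm cyl}(\phi)\subset Z$ (with $H_0={\rm id}$ and $H_1=r$), glued with ${\rm id}_Z$ on $Z\times\{0\}$, from the closed subset $({\rm cyl}(\phi)\times I)\cup(Z\times\{0\})$ of $Z\times I$ to a map $\widetilde H\colon N\times I\to Z$ defined on a neighborhood $N$ of ${\rm cyl}(\phi)$; this uses only that $Z$ is an ANR. Shrinking $N$ via compactness of ${\rm cyl}(\phi)$ and continuity of $\widetilde H$, I can arrange $\widetilde H(N\times I)\subset U$, and then set $\widetilde\phi:=\widetilde H_1$ restricted to a neighborhood $OK\subset N$ of $K$. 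Because $H_1|_K=r|_K=\phi$, this $\widetilde\phi\colon OK\to Z$ is a genuine extension of $\phi$, precisely of the form required by the hypothesis.

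Now I would apply the singular approximate $n$-asphericity hypothesis with this $\widetilde\phi$ and with the neighborhood $U$ of $K'$: it yields a neighborhood $V\subset OK$ of $K$ such that for every $f\colon S^n\to V$, the composition $\widetilde\phi\circ f\colon S^n\to U$ is nullhomotopic in $U$. For such an $f$, the map $\widetilde H\circ(f\times{\rm id}_I)\colon S^n\times I\to U$ is a homotopy inside $U$ from $f=\widetilde H_0\circ f$ to $\widetilde\phi\circ f=\widetilde H_1\circ f$, and concatenating with the given nullhomotopy of $\widetilde\phi\circ f$ kills $f$ in $U$. The one delicate point is the simultaneous control of $\widetilde H$ — keeping its image in $U$ while ensuring $\widetilde H_1$ restricts to a valid extension of $\phi$ — but this is a routine ANR-homotopy-extension exercise once the observation $r|_K=\phi$ is in hand, so I do not expect any real obstacle.
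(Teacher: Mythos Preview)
Your proof is correct and follows the same idea as the paper's: push a spheroid near the top $K$ down along (an extension of) the cylinder retraction to a spheroid near the bottom $K'$, then contract it there using the singular hypothesis. The paper carries this out concretely via the explicit product embedding of Remark~\ref{rememb}---extending $\phi$ to a map $\phi_1\colon B_1\to B_2$ and sliding along the cylinder ${\rm cyl}(\phi_1|_{V_1})\subset U$---whereas you package the same retraction-and-push step as a single ANR homotopy-extension argument; the two implementations are equivalent in substance.
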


\begin{proof}
Let us fix embeddings of $K$ into Banach space $B_1$, of $K'$ into
Banach space $B_2$, and of the cylinder ${\rm cyl}(\phi)$ into the
product $B=B_1\times \mathbb R\times B_2$ as described in
Remark~\ref{rememb}. Fix a neighborhood $U$ of ${\rm cyl}(\phi)$
in $B$. Extend the mapping $\phi$ to a map $\phi_1\colon B_1\to
B_2$. Take a neighborhood $V_1$ of the top of our cylinder in
$B_1$ such that the cylinder ${\rm cyl}(\phi_1|_{V_1})$ is
contained in $U$. Using approximate $n$-asphericity of the pair
$(K,\phi,K')$ we find for a neighborhood $U\cap
\{0\}\times\{1\}\times B_2$ of $K'$ in $\{0\}\times\{1\}\times
B_2$ a neighborhood $V'$ of $K$ in $B_1\times\{0\}\times\{0\}$.
Let $\varepsilon$ be a positive number such that the product
$V=V'\times (-\varepsilon,\varepsilon)\times O(0,\varepsilon)$ is
contained in $U$.

Given a spheroid $f\colon S^n\to V$ we retract it into
$V'\times\{0\}\times\{0\}$, then retract it to the bottom of the
cylinder ${\rm cyl}(\phi_1|_{V_1})$ using Remark~\ref{remretr},
and finally contract it to a point inside $U\cap
\{0\}\times\{1\}\times B_2$. Clearly, the whole retraction sits
inside $U$, as required.
\end{proof}

\begin{defin}
Let $\mathcal F=\{(F_i\colon X\to Y_i,T_i)\}_{i=0}^n$ be a
singular filtration of a multivalued mapping $F\colon X\to
Y=Y_{n+1}$. If all the spaces $Y_i$ are Banach, then for a
multivalued mapping $\mathbb F$ from $X$ to $\mathbb Y=Y\times
\prod_{i=0}^n (Y_i\times\mathbb R)$ defined as $\mathbb
F(x)=\cup_{k=0}^n{\rm cyl}(T_k(x))$ we can define a {\it cylinder}
${\rm cyl}(\mathcal F)$ as a filtration of multivalued maps
$\{\mathbb F_i\}_{i=0}^n$ defined as follows:
\[  \mathbb F_0=F_0 \text{ \ and \ }
    \mathbb F_i(x)=\bigcup_{k=0}^{i-1}{\rm cyl}(T_k(x)).  \]
\end{defin}

It is easy to see that for a singular filtration $\mathcal
F=\{(F_i,T_i)\}_{i=0}^n$ of compact mappings $F_i$ the filtration
${\rm cyl}(\mathcal F)$ consists of compact mappings $\mathcal
F_i$.

\begin{lem} \label{lemapprfil}
If a singular filtration $\mathcal F=\{(F_i,T_i)\}_{i=0}^n$ of
compact maps is approximately connected, then the filtration ${\rm
cyl}(\mathcal F)=\{\mathbb F_i\}_{i=0}^n$ is approximately
connected.
\end{lem}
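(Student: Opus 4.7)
The plan is to verify the claim pointwise: fixing $x \in X$ and $0 \le i < n$, I show that the pair $\mathbb F_i(x) \subset \mathbb F_{i+1}(x)$ is approximately $i$-aspherical. First observe that $\mathbb F_{i+1}(x) = \mathbb F_i(x) \cup {\rm cyl}(T_i(x))$, with the two pieces glued along the top $F_i(x)$ of ${\rm cyl}(T_i(x))$; this top lies in $\mathbb F_i(x)$, either as $\mathbb F_0(x) = F_0(x)$ when $i = 0$ or as the bottom of ${\rm cyl}(T_{i-1}(x))$ when $i \ge 1$. Because the singular pair $(F_i(x), T_i(x), F_{i+1}(x))$ is approximately $i$-aspherical by hypothesis, Lemma~\ref{lemasphcomp} yields that the pair $F_i(x) \subset {\rm cyl}(T_i(x))$ is itself approximately $i$-aspherical.

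The remaining ingredient I would exploit is that $\mathbb F_i(x)$ deformation retracts onto $F_i(x)$: iterating the bottom-retraction of Remark~\ref{remretr} on each ${\rm cyl}(T_k(x))$ in order $k = 0, 1, \ldots, i-1$ produces a deformation of $\mathbb F_i(x)$ that stays inside $\mathbb F_i(x)$ throughout and lands on $F_i(x)$. Now fix an embedding $\mathbb F_{i+1}(x) \hookrightarrow Z$ into an ANR and a neighborhood $U$ of $\mathbb F_{i+1}(x)$ in $Z$. The approximate $i$-asphericity of $F_i(x) \subset {\rm cyl}(T_i(x))$ (applied inside $Z$, with $U$ as ambient neighborhood) supplies a neighborhood $W$ of $F_i(x)$ in $Z$ such that every map $S^i \to W$ is null-homotopic in $U$. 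Using that $Z$ is an ANR together with compactness of $\mathbb F_i(x)$, I would extend the deformation retraction to a continuous $H \colon V \times [0,1] \to Z$ on a neighborhood $V$ of $\mathbb F_i(x)$ in $Z$, with $H_0$ the identity and image contained in $U$; by continuity of $H_1$ (which sends $\mathbb F_i(x)$ into $F_i(x)$), a further shrinking of $V$ guarantees $H_1(V) \subset W$.

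Given any $f \colon S^i \to V$, the homotopy $t \mapsto H(f(\cdot), t)$ takes place in $U$ and ends at $H_1 \circ f \colon S^i \to W$, which is null-homotopic in $U$ by the choice of $W$. Concatenating the two homotopies shows that $f$ itself is null-homotopic in $U$, establishing the desired approximate $i$-asphericity. Since $i$ and $x$ were arbitrary, the filtration ${\rm cyl}(\mathcal F)$ is approximately connected.

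The main obstacle I anticipate is the technical step of extending the abstract deformation retraction of $\mathbb F_i(x)$ onto $F_i(x)$ to a homotopy on a neighborhood in $Z$, while simultaneously controlling that its track stays in $U$ and that the terminal image $H_1(V)$ lies in the prescribed small neighborhood $W$. This is handled by applying the ANR extension property to the map on $\mathbb F_i(x) \times [0,1] \cup Z \times \{0\}$ built from the deformation and the identity, then using compactness of $\mathbb F_i(x) \times [0,1]$ to shrink $V$ enough that both containment conditions hold.
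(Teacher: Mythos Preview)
Your argument is correct and follows the same outline as the paper's proof: both first invoke Lemma~\ref{lemasphcomp} to obtain approximate $i$-asphericity of the pair $F_i(x)\subset{\rm cyl}(T_i(x))$, and then use the iterated bottom-retraction of Remark~\ref{remretr} from $\mathbb F_i(x)$ onto $F_i(x)$ to transfer this to the pair $\mathbb F_i(x)\subset\mathbb F_{i+1}(x)$. The only difference is in how the transfer step is packaged. The paper observes that the retraction in fact extends to a $UV^\infty$-map of pairs $(\mathbb F_i(x),\mathbb F_{i+1}(x))\to(F_i(x),{\rm cyl}(T_i(x)))$ and then cites the Pairs Mapping Lemma from~\cite{SB}, which states in general that approximate $i$-asphericity pulls back along $UV^\infty$-maps of pairs. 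You instead carry out the transfer by hand, extending the deformation retraction to a neighborhood in the ambient ANR and pushing spheroids into the prescribed small neighborhood $W$; this is precisely the content of the cited lemma in the special case of a deformation retraction, so your argument is more self-contained while the paper's is shorter.
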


\begin{proof}
Using Remark~\ref{remretr} it is easy to define a deformation
retraction $r\colon \mathbb F_i(x)\to F_i(x)$ which is
$UV^\infty$-mapping. This retraction defines $UV^\infty$-mapping
of pairs $(\mathbb F_i(x),\mathbb F_{i+1}(x))\to (F_i(x),{\rm
cyl}(T_i(x)))$. By Lemma~\ref{lemasphcomp} the pair
$F_i(x)\subset{\rm cyl}(T_i(x))$ is approximately $i$-aspherical,
and by Pairs Mapping Lemma from~\cite{SB} the pair $\mathbb
F_i(x)\subset\mathbb F_{i+1}(x)$ is also approximately
$i$-aspherical.
\end{proof}

\begin{thm} \label{thmappr}
Let $H\colon X\to Y$ be a multivalued mapping of metric space $X$
to a Banach space $Y$.
If $\dim X\le n$ and $H$ contains approximately connected
singular filtration $\mathcal H=\{(H_i\colon X\to Y_i,T_i)\}_{i=0}^n$
of compact mappings, then any neighborhood $\mathcal U$
of the graph $\Gamma_H$ contains the graph of a single-valued
and continuous mapping $h\colon X\to Y$.
\end{thm}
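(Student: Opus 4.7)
The plan is to reduce Theorem~\ref{thmappr} to the ordinary (non-singular) Filtered Approximation Theorem of~\cite{SB} through the cylinder construction. That construction converts the singular filtration $\mathcal H$ into an ordinary filtration $\{\mathbb H_i\}_{i=0}^n$ of compact multivalued maps into one common Banach space $\mathbb Y=Y\times\prod_{k=0}^n(Y_k\times\mathbb R)$, so the non-singular machinery becomes available; the $UV^\infty$ retractions of cylinders onto their bottoms from Remark~\ref{remretr}, composed with the last fiberwise transformation $T_n$, will then let us project the resulting continuous approximation back into $Y$.

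Concretely, I would first form $\{\mathbb H_i\}_{i=0}^n$ and invoke Lemma~\ref{lemapprfil} to conclude that this ordinary filtration is approximately connected. Since $\mathbb Y$ is a Banach space and $\dim X\le n$, the Filtered Approximation Theorem of~\cite{SB} then supplies, for any preassigned neighborhood $\mathcal V$ of $\Gamma_{\mathbb H_n}$ in $X\times\mathbb Y$, a continuous single-valued map $\widetilde h\colon X\to\mathbb Y$ with $\Gamma_{\widetilde h}\subset\mathcal V$.

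To return to $Y$, I would assemble the iterated fiberwise retraction $r\colon\mathbb H_n\to H_n$ by collapsing ${\rm cyl}(T_0),\dots,{\rm cyl}(T_{n-1})$ onto their bottoms, and then post-compose with $T_n\colon H_n\to H$ to obtain a continuous map $\rho=T_n\circ r\colon\Gamma_{\mathbb H_n}\to\Gamma_H$. Because $Y$ is a Banach space, hence an ANR, the $Y$-component of $\rho$ extends to a continuous map $\overline\rho\colon W\to Y$ on an open neighborhood $W$ of $\Gamma_{\mathbb H_n}$ in $X\times\mathbb Y$. One shrinks $W$ so that $(x,\overline\rho(x,y))\in\mathcal U$ whenever $(x,y)\in W$, using the fact that $\rho$ already sends $\Gamma_{\mathbb H_n}$ into $\Gamma_H\subset\mathcal U$ and that the fibers $\mathbb H_n(x)$ are compact; taking $\mathcal V=W$ and setting $h(x)=\overline\rho(x,\widetilde h(x))$ finishes the construction.

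The hard part, I expect, will be this last uniform-shrinking step when $X$ is not compact. Since $\Gamma_{\mathbb H_n}$ is only fiberwise compact, no single modulus of continuity controls $\overline\rho$ on all of $W$ at once, so the neighborhood governing the size of $\overline\rho(W)$ has to be chosen locally in $X$. This is most naturally handled by working with a locally finite cover (or with the triangulation already underlying~\cite{SB}) and tuning the approximation in each piece so that the composed $h$ remains inside $\mathcal U$ globally.
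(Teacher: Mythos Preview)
Your approach is essentially the paper's own proof: pass to the cylinder filtration, invoke Lemma~\ref{lemapprfil}, apply the (non-singular) Approximation Theorem of~\cite{SB} in $\mathbb Y$, then push the result back to $Y$ via an ANR-extension of the fiberwise retraction onto $\Gamma_H$. The paper packages the last step slightly differently---it retracts the full union $\mathbb H(x)=\bigcup_{k=0}^{n}{\rm cyl}(T_k(x))$ (including ${\rm cyl}(T_n)$) directly onto $\Gamma_H$ rather than first onto $\Gamma_{H_n}$ and then composing with $T_n$---but this is only cosmetic.

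Your anticipated ``hard part'' is not actually hard, and the paper dispatches it in one clause. No uniform modulus is needed: for each point $(x,y)\in\Gamma_{\mathbb H_n}$ one has $(x,\overline\rho(x,y))\in\Gamma_H\subset\mathcal U$, so by continuity of $(x',y')\mapsto(x',\overline\rho(x',y'))$ and openness of $\mathcal U$ there is an open neighborhood $V_{(x,y)}\subset W$ of $(x,y)$ sent into $\mathcal U$. Replacing $W$ by the union $\bigcup V_{(x,y)}$ gives an open neighborhood of $\Gamma_{\mathbb H_n}$ with the desired property, regardless of whether $X$ is compact. (One does need $\Gamma_{\mathbb H_n}$ closed in $X\times\mathbb Y$ to perform the ANR-extension, but that follows from compactness of the maps $H_i$.) You should also record at the outset, as the paper does, that the $Y_i$ may be assumed Banach, so that $\mathbb Y$ is genuinely a Banach space.
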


\begin{proof}
Without loss of generality we may assume that all spaces $Y_i$ are
Banach spaces. We consider $Y$ as a subspace of the product
$\mathbb Y=Y\times \prod_{i=0}^n (Y_i\times\mathbb R)$. Clearly,
$H$ is a submapping of a multivalued mapping $\mathbb H\colon X\to
\mathbb Y$ defined as $\mathbb H(x)=\cup_{k=0}^n{\rm cyl}(T_k(x))$
and $\Gamma_{\mathbb H}$ admits a deformation retraction $R$ onto
$\Gamma_H$. Fix a neighborhood $\mathcal U$ of the graph
$\Gamma_H$ in $X\times Y$. Since all maps $H_i$ are compact,
$\mathbb H$ is also compact and the graph $\Gamma_{\mathbb H}$ is
closed in $X\times\mathbb Y$. Extend the mapping ${\rm pr}_Y\circ
R\colon \Gamma_{\mathbb H}\to Y$ to some neighborhood $\mathcal W$
of $\Gamma_{\mathbb H}$ in $X\times\mathbb Y$ and denote by $R'$
the map of $\mathcal W$ to $X\times Y$ such that ${\rm pr}_Y\circ
R'$ is our extension. Clearly, we may assume that $R'(\mathcal W)$
is contained in $\mathcal U$.

By Lemma~\ref{lemapprfil} the multivalued map $\mathbb H$ admits
approximately connected filtration ${\rm cyl}(\mathcal H)$ of
compact multivalued maps. By Single-Valued Approximation Theorem
from~\cite{SB} there exists a singlevalued continuous mapping
${\bf h}\colon X\to \mathbb Y$ with $\Gamma_{\bf h}\subset\mathcal
W$. Define a singlevalued continuous map $h$ by the equality
$\Gamma_h=R'(\Gamma_{\bf h})$. Clearly, $\Gamma_h$ is contained in
$R'(\mathcal W)\subset \mathcal U$.
\end{proof}

\begin{thm} \label{thmapprox}
Suppose that a compact mapping of separable metric ANRs $F\colon X\to Y$
admits a compact singular approximately $\infty$-connected filtration.
Then for any compact space $K\subset X$ every neighborhood of
the graph $\Gamma_F(K)$ contains the graph of a single-valued
and continuous mapping $f\colon K\to Y$.
\end{thm}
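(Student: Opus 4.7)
The plan is to reduce Theorem~\ref{thmapprox} to the finite-dimensional Banach-target version Theorem~\ref{thmappr} by performing two standard reductions: replacing the target ANR $Y$ by a Banach space containing it as a retract, and replacing the (possibly infinite-dimensional) compact domain $K$ by a finite polyhedron via a nerve construction. The extendability of the filtration afforded by approximate $\infty$-connectedness is what makes these two moves compatible.

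First, since $Y$ is a separable metric ANR, I embed $Y$ as a closed subset of a Banach space $B$ together with a neighborhood retraction $r\colon W\to Y$, where $W$ is open in $B$. Given a neighborhood $\mathcal U$ of $\Gamma_{F|_K}$ in $K\times Y$, compactness of $F(K)$ and continuity of $r$ produce a neighborhood $\mathcal U_1$ of $\Gamma_{F|_K}$ in $K\times B$ with $(x,r(b))\in\mathcal U$ whenever $(x,b)\in\mathcal U_1$. It therefore suffices to produce a continuous $f'\colon K\to B$ with $\Gamma_{f'}\subset\mathcal U_1$ and set $f=r\circ f'$. I also embed each intermediate $Y_i$ into a Banach space so the hypotheses of Theorem~\ref{thmappr} are in reach.

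Second, I choose a finite open cover $\omega=\{U_j\}$ of $K$ that is fine with respect to the upper semicontinuous data of $F$ and the filtration, let $P=N(\omega)$ be its nerve, a finite polyhedron of some dimension $n$, and let $\kappa\colon K\to P$ be the canonical map. Using approximate $\infty$-connectedness I first extend the singular filtration of $F$ to length $n$, and then construct an induced compact singular approximately connected filtration $\{(\widetilde F_i,\widetilde T_i)\}_{i=0}^n$ of a multivalued map $\widetilde F\colon P\to B$ simplex by simplex: at each vertex $v_j$ I fix a representative $x_j\in U_j$ and set $\widetilde F_i(v_j)=F_i(x_j)$, and over a simplex $\sigma=[v_{j_0},\dots,v_{j_k}]$ I glue these vertex fibers together using the transformations $T_i$ in a cylinder-type construction modeled on Lemma~\ref{lemasphcomp}. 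Theorem~\ref{thmappr} applied to $\widetilde F$ on the $n$-dimensional polyhedron $P$ then yields a continuous single-valued $h\colon P\to B$ whose graph lies in any preassigned neighborhood of $\Gamma_{\widetilde F}$, and $f=r\circ h\circ\kappa\colon K\to Y$ is the desired approximation provided $\omega$ (and the target neighborhood in $P\times B$) are chosen fine enough.

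The main obstacle is the construction of the simplicial filtration $\{\widetilde F_i\}$ on $P$: over each simplex $\sigma$ one must glue together the compact sets $F_i(x_{j_\ell})$ attached to the vertices of $\sigma$, using the transformations $T_i$, into a single compact set $\widetilde F_i(\sigma)$ so that every pair $\widetilde F_{i-1}(p)\subset\widetilde F_i(p)$ remains approximately $i$-aspherical and so that $\widetilde F_n(p)$ stays inside a controlled neighborhood of $\bigcup_\ell F(x_{j_\ell})$. Lemma~\ref{lemasphcomp} together with the Pairs Mapping Lemma used in Lemma~\ref{lemapprfil} supplies the right gluing mechanism: the transformations $T_i$ provide exactly the data needed to form cylinders at each simplex of $P$, and the cylinder-$UV^\infty$-retraction argument transports approximate $i$-asphericity from the simplex-wise cylinders to the assembled pairs $\widetilde F_{i-1}\subset\widetilde F_i$.
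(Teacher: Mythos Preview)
Your overall reduction strategy---replace the compact domain by a finite polyhedron and invoke Theorem~\ref{thmappr}---is the same as the paper's, and your Step~1 (embedding $Y$ in a Banach space and retracting back) is correct and in fact fills a small gap the paper glosses over. The problem is Step~3, the construction of the filtration $\{\widetilde F_i\}$ on the nerve $P$. The fiberwise transformations $T_i$ send $F_i(x)$ into $F_{i+1}(x)$ for the \emph{same} base point $x$; they carry no information relating $F_i(x_{j_0})$ to $F_i(x_{j_1})$ for \emph{different} vertex representatives $x_{j_0},x_{j_1}$. So ``gluing vertex fibers together using the transformations $T_i$ in a cylinder-type construction'' is not well-defined: the cylinders of Lemma~\ref{lemasphcomp} connect consecutive \emph{levels} over one point, not the same level over distinct points, and Lemma~\ref{lemapprfil} does not supply a mechanism to interpolate across a simplex. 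As written there is no map $\widetilde F_i$ on~$P$, and hence nothing to feed into Theorem~\ref{thmappr}.

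The paper sidesteps this entirely by exploiting the ANR hypothesis on $X$ more fully. Rather than using only the canonical map $\kappa\colon K\to N(\omega)$ to a nerve, it uses Hanner's domination: there exist a simplicial complex $L$ and maps $r\colon X\to L$, $j\colon L\to X$ with $j\circ r$ close to ${\rm id}_X$. One then takes a finite subcomplex $N\supset r(K)$ and simply \emph{pulls back} the entire singular filtration along the honest single-valued map $j$: set $\Psi=F\circ j$ and $\Psi_i=F_i\circ j$ with transformations $T_i$ precomposed with $j$. This is automatically a compact singular approximately connected filtration of $\Psi$ of length $\dim N$ (here approximate $\infty$-connectedness is used), and Theorem~\ref{thmappr} applies directly to $\Psi\colon N\to Y$; the approximation is then $f=\psi\circ r|_K$. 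The moral: you need a continuous map \emph{from} the polyhedron back into $X$, not just the nerve map in the other direction, and the ANR structure of $X$ is exactly what provides it.
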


\begin{proof}
Let $\mathcal U$ be an open neighborhood of the graph $\Gamma_F(K)$
in the product $X\times Y$. Since $F$ is upper semicontinuous,
there is a neighborhood $OK$ of compactum $K$ such that
$\Gamma_F(OK)$ is contained in $\mathcal U$.
Since any open subset of separable ANR-space is separable ANR-space
\cite{Han}, we can denote $OK$ by $X$ and consider $\mathcal U$
as an open neighborhood of the graph $\Gamma_F$.

For every point $x\in X$ take open neighborhoods $O_x\subset X$
of the point $x$ and $V_x\subset X$ of the compactum $F(x)$
such that the product $O_x\times V_x$ is contained in $\mathcal U$.
Using upper semicontinuity of $F$ we can choose $O_x$
so small that the following inclusion holds: $F(O_x)\subset V_x$.
Fix an open covering $\omega_1$ of the space $X$ which is
starlike refinement of $\{O_x\}_{x\in X}$.
Let $\omega_2$ be a locally finite open covering of the space $X$
which is starlike refined into $\omega_1$.

There exist a locally finite simplicial complex $L$ and mappings
$r\colon X\to L$ and $j\colon L\to X$ such that the map $j\circ r$
is $\omega_2$-close to ${\rm id}_X$ \cite{Han}. Fix a finite
subcomplex $N\subset L$ containing the compact set $r(K)$. Define
a compact mapping $\Psi\colon N\to Y$ by the formula $\Psi=F\circ
j$. Clearly, the mapping $\Psi$ admits a compact approximately
connected singular filtration of any length (particularly, of the
length $\dim N$). Let us define a neighborhood $\mathcal W$ of the
graph $\Gamma_\Psi$. For every point $q\in N$ we put

$$ \mathcal W(q)=\bigcap\{\mathcal U(y)\mid y\in{\rm st}_{\omega_1}({\rm St}_{\omega_2}(j(q)))\}. $$

By ${\rm St}_{\omega_2}(j(q))$, we denote the star of the point
$j(q)$ with respect to the covering $\omega_2$. And by ${\rm
st}(A,\omega)$, we denote the set $\bigcup\{U\in\omega\mid
A\subset U\}$.

By Theorem~\ref{thmappr} there exists a single-valued continuous
mapping $\psi\colon N\to Y$ such that the graph $\Gamma_\psi$ is
contained in $\mathcal W$. Put $f=\psi\circ r\colon X\to Y$. For
any point $x\in K$ we have $\psi(r(x))\in\cap\{\mathcal U(x')\mid
x'\in{\rm St}_{\omega_2}(j\circ r(x))\}$. Since $x\in{\rm
St}_{\omega_2}(j\circ r(x))$, then $\psi(r(x))\in\mathcal U(x)$.
That is, the graph of $f$ is contained in $\mathcal U$.
\end{proof}

\section{Fibrations with 2-manifold fibers}\label{33}

The following Lemma is a weak form of Compact Filtration Lemma
from~\cite{SB}.

\begin{lem}\label{L31}
Any polyhedrally connected lower continuous finite filtration of
complete mappings of a compact space contains a compact
approximately connected subfiltration of the same length.
\end{lem}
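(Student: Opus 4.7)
The plan is to build the compact approximately connected subfiltration $\{G_i\}_{i=0}^n$ by downward induction on $i$, producing each $G_i$ as the fiberwise intersection of a decreasing sequence of compact submappings of $F_i$ whose diameters shrink to zero. Completeness of the $F_i$ ensures that such an intersection is a nonempty compact submapping, while the shrinking is arranged so that approximate asphericity of the pair $G_i(x)\subset G_{i+1}(x)$ falls out in the limit.

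The crux is an iterable \emph{shrinking step}: given a compact submapping $G\subset F_{i+1}$ with small-diameter fibers and given $\varepsilon>0$, produce a compact submapping $H\subset F_i$ with $H(x)\subset G(x)$, still with small fibers, such that an open neighborhood of $H(x)$ in $F_{i+1}(x)$ is polyhedrally $i$-connected into the $\varepsilon$-neighborhood of $G(x)$. This is obtained by applying Lemma~\ref{L32} with $\Phi=F_{i+1}$, taking the compact submapping to be (a compact submapping of) $F_i$, and $\alpha$ the polyhedral $i$-connectedness ordering on subsets of $Y$. Compactness of $X$ is used to upgrade the pointwise constants $\delta(x,\varepsilon)$ to a uniform $\delta$ via a finite subcover, after which any compact submapping of $F_i$ contained fiberwise in $G$ with fibers of diameter below $\delta$ meets the requirement.

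The downward induction then runs as follows. Start with any compact submapping $G_n\subset F_n$, which exists by completeness of $F_n$ over the compactum $X$. Having constructed $G_{i+1}$, iterate the shrinking step with $\varepsilon_k\to 0$ to obtain a nested sequence $H^{(1)}\supset H^{(2)}\supset\cdots$ of compact submappings of $F_i$, all fiberwise contained in $G_{i+1}$, and set $G_i(x)=\bigcap_k H^{(k)}(x)$. Upper semicontinuity and compactness of $G_i$ follow from the standard monotone-limit argument, and the nested polyhedral connectedness conditions on the $H^{(k)}$, once the fibers are embedded into an ambient ANR, translate in the limit into approximate $i$-asphericity of the pair $G_i(x)\subset G_{i+1}(x)$.

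The main obstacle is coordinating the two levels: Lemma~\ref{L32} controls the local neighborhood structure inside $F_{i+1}$ rather than inside the proper subset $G_{i+1}$. The way around this is to strengthen the inductive hypothesis so that each $G_{i+1}$ comes with a uniform diameter bound on its fibers, chosen small enough that every fiber $G_{i+1}(x)$ sits inside some slice $O(y,\delta)\cap F_{i+1}(x)$ to which Lemma~\ref{L32} applies with the prescribed $\varepsilon$. Propagating this diameter bound through the induction by a judicious choice of the $\varepsilon_k$ at each level is the delicate technical point, and is exactly the feature that makes the statement a ``weak form'' of the full Compact Filtration Lemma of~\cite{SB}.
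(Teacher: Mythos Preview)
The paper does not prove this lemma at all: it is stated as ``a weak form of [the] Compact Filtration Lemma from~\cite{SB}'' and simply cited. So there is no in-paper argument to compare against, and any assessment has to be of your plan on its own merits.

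Your overall architecture (downward induction on $i$, using Lemma~\ref{L32} to control pairs uniformly over the compact base) is in the right spirit, but the plan contains a genuine misstep. You propose to build each $G_i$ as the intersection of a nested sequence of compact submappings of $F_i$ ``whose diameters shrink to zero.'' If the fiber diameters really go to zero, then each $G_i$ is a \emph{single-valued continuous selection} of $F_i$. That is far stronger than what the lemma asserts, and it is precisely the conclusion that the entire approximation machinery of Section~\ref{S:approx} (Theorems~\ref{thmappr} and~\ref{thmapprox}) and Theorem~\ref{thmapproxsections} is designed to reach \emph{after} one has a compact approximately connected filtration in hand. Getting continuous selections directly here would make all of that superfluous, and there is no reason to expect it to be achievable with only the hypotheses of Lemma~\ref{L31}. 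The compact subfiltration you are after should have genuinely set-valued fibers; the shrinking should be in the sense of passing to compact submappings with controlled neighborhood behavior, not in the sense of collapsing fibers to points.

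Two smaller gaps: first, the existence of an initial compact submapping $G_n\subset F_n$ does not follow from completeness alone; one needs the lower semicontinuity of $F_n$ and a Michael-type argument to produce it. Second, Lemma~\ref{L32} applied to $\Phi=F_{i+1}$ yields polyhedral connectedness of \emph{metric balls inside $F_{i+1}(x)$}, whereas approximate $i$-asphericity of the pair $G_i(x)\subset G_{i+1}(x)$ is a statement about neighborhoods in an ambient ANR. Bridging these requires an additional step you have not indicated (and is one of the places where the argument in~\cite{SB} does real work).
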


\begin{lem} \label{lemmashrinking}
Let $F\colon X\to Y$ be equi locally hereditarily aspherical, lower
2-continuous complete multivalued mapping of $ANR$-space $X$ to
Banach space $Y$. Suppose that a compact submapping $\Psi\colon
A\to Y$ of $F|_A$ is defined on a compactum $A\subset X$ and
admits continuous approximations. Then for any $\varepsilon>0$
there exists a neighborhood $OA$ of $A$ and a compact submapping
$\Psi'\colon OA\to Y$ of $F|_{OA}$ such that
$\Gamma_{\Psi'}\subset O(\Gamma_\Psi,\varepsilon)$, $\Psi'$ admits
a compact approximately $\infty$-connected filtration, and ${\rm
cal} \Psi'<\varepsilon$.
\end{lem}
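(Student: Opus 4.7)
The plan is to replace $\Psi$ by a close single-valued approximation, extend this approximation over a neighborhood of $A$, and then carve out $\Psi'$ together with its singular filtration by intersecting $F$ with a nested family of closed balls centered on the extension.

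First, I would pick a parameter $\delta_0>0$, to be fixed later in terms of $\varepsilon$, and use the hypothesis that $\Psi$ admits continuous approximations to choose a single-valued $\psi\colon A\to Y$ with $\Gamma_\psi\subset O(\Gamma_\Psi,\delta_0)$. Since $X$ is an $ANR$ and the Banach space $Y$ is an $AR$, the map $\psi$ extends to a continuous map $\widetilde\psi\colon OA\to Y$ on some open neighborhood $OA$ of $A$ in $X$; after shrinking $OA$ I may assume ${\rm dist}(\widetilde\psi(x),\Gamma_\Psi)<2\delta_0$ for every $x\in OA$.

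Next, I would produce four nested radii $r_0<r_1<r_2<r_3<\varepsilon/2$ and set
\[
F_i(x)=F(x)\cap \overline{B\bigl(\widetilde\psi(x),r_i\bigr)},\qquad i=0,1,2,3,
\]
with $\Psi':=F_3$ and the fiberwise transformations $T_i$ taken to be the inclusions $F_i^\Gamma(x)\hookrightarrow F_{i+1}^\Gamma(x)$, so that $\{(F_i,T_i)\}_{i=0}^2$ is a singular filtration of $\Psi'$ in the paper's sense. The equi local hereditary asphericity of $F$ together with sufficient smallness of $r_2$ ensures that each $F_2(x)$ is a compact subset of a hereditarily aspherical portion of the fiber of $F$, hence approximately $k$-aspherical for every $k\ge 2$. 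To arrange the pair asphericities I apply Lemma~\ref{L32} twice to $\Phi=F$ with two different proper orderings: first, the ordering $W\alpha V$ meaning that every map $S^1\to W$ is null-homotopic in $V$, which $F$ satisfies by lower $2$-continuity, yields $r_1<r_2$ so that the pair $F_1(x)\subset F_2(x)$ is approximately $1$-aspherical; second, the analogous ordering for $S^0$-maps (the $LC^0$-property of $F$, also a consequence of lower $2$-continuity together with Lemma~\ref{lempolcont}) yields $r_0<r_1$ so that $F_0(x)\subset F_1(x)$ is approximately $0$-aspherical. Finally, lower semicontinuity of $F$ combined with ${\rm dist}(\widetilde\psi(x),\Gamma_\Psi)<2\delta_0$ forces $F_0(x)$ to be non-empty on a further restricted neighborhood of $A$, still denoted $OA$.

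It remains to check the three conclusions of the lemma and the approximate $\infty$-connectedness of the filtration. Upper semicontinuity of each $F_i$ follows from completeness of $F$ combined with intersection against the closed set $\{(x,y)\in X\times Y:{\rm dist}(y,\widetilde\psi(x))\le r_i\}$; compactness of the fibers $F_i(x)$ follows from the smallness of $\delta_0$ and the $r_i$, since the equi local hereditary asphericity of $F$ localises the relevant portion of $F(x)$ around $\widetilde\psi(x)$ into a precompact piece sitting near the compact graph $\Gamma_\Psi$. The gauge estimate ${\rm cal}\,\Psi'\le 2r_3<\varepsilon$ and the inclusion $\Gamma_{\Psi'}\subset O(\Gamma_\Psi,\varepsilon)$ are then immediate from the choice $\delta_0+r_3<\varepsilon$. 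The main obstacle I expect is the bookkeeping of the two successive applications of Lemma~\ref{L32}: each one narrows both the allowed base neighborhood of $A$ and the effective radius of the ball in $Y$, so the four radii and the final common $OA$ must be interleaved carefully in order to realise the compact approximately $\infty$-connected filtration uniformly for every $x\in OA$.
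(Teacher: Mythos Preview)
Your overall strategy---approximate $\Psi$ by a single-valued map, extend it over a neighborhood, and slice the filtration out of $F$ by intersecting with nested balls around the extension---is exactly the route the paper takes. The gap is in how you pass from those intersections to a \emph{compact} approximately connected filtration.

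The mapping $F$ is only assumed complete: its fibers are closed in some $G_\delta$-subset of $X\times Y$, not in $Y$ itself, and $Y$ is an arbitrary Banach space, so closed bounded subsets need not be compact. Hence your sets $F_i(x)=F(x)\cap\overline{B(\widetilde\psi(x),r_i)}$ have no reason to be compact, and your appeal to equi local hereditary asphericity for this purpose is a non sequitur---that hypothesis concerns asphericity of small pieces of the fibers, not their precompactness. For the same reason, the claim that completeness of $F$ plus intersection with a closed set yields upper semicontinuity of the $F_i$ is unfounded. A second, related issue is that what Lemma~\ref{L32} actually delivers is polyhedral (or $LC^k$-type) connectedness of the pair $\bigl(O(y,r_i)\cap F(x),\,O(y,r_{i+1})\cap F(x)\bigr)$; this is not the same as approximate $i$-asphericity of a pair of compacta in the sense of Definition~\ref{appcon}, which is a statement about arbitrary ambient neighborhoods and requires compactness to even be formulated.

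The paper closes both gaps in one stroke by invoking Lemma~\ref{L31} (the Compact Filtration Lemma from~\cite{SB}): it uses open balls to build a filtration $G_0\subset G_1\subset G_2$ of \emph{complete} mappings that is polyhedrally connected and lower continuous, and then Lemma~\ref{L31} extracts a compact approximately connected subfiltration $F_0\subset F_1\subset F_2$. Only at that stage does one set $\Psi'=F_2$ and use hereditary asphericity to upgrade to approximate $\infty$-connectedness. Your argument is essentially the paper's with this key lemma omitted; inserting it (and switching to open balls so the $G_i$ remain complete) repairs the proof.
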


\begin{proof}
Fix a positive number $\varepsilon$.
Apply Lemma~\ref{L32} with $\alpha$ being equi local hereditary asphericity
to get a positive number $\varepsilon_2<\varepsilon/4$.
By Lemma~\ref{lempolcont} the mapping $F$ is lower polyhedrally 2-continuous.
Subsequently applying Lemma~\ref{L32} with $\alpha$ being
polyhedral $n$-continuity for $n=2,1,0$, we find positive numbers
$\varepsilon_1$, $\varepsilon_0$, and $\delta$ such that $\delta<\varepsilon_0<\varepsilon_1<\varepsilon_2$
and for every point $(x,y)\in O(\Gamma_{\Psi},\delta)$
the pair $(O(y,\varepsilon_1)\cap F(x),O(y,\varepsilon_2)\cap F(x))$
is polyhedrally 2-connected,
the pair $(O(y,\varepsilon_0)\cap F(x),O(y,\varepsilon_1)\cap F(x))$
is polyhedrally 1-connected, and
the intersection $O(y,\varepsilon_0)\cap F(x)$ is not empty.

Let $f\colon K\to Y$ be a continuous single-valued mapping
whose graph is contained in $O(\Gamma_{\Psi},\delta)$.
Let $f'\colon\mathcal OK\to Y$ be a continuous extension of the mapping $f$
over some neighborhood $\mathcal OK$ such that the graph of $f'$
is contained in $O(\Gamma_{\Psi},\delta)$.
Now we can define a polyhedrally connected filtration
$G_0\subset G_1\subset G_2\colon \mathcal OK\to Y$
of the mapping $F|_{\mathcal OK}$ by the equality
\[
G_i(x)=O(f'(x),\varepsilon_i)\cap F(x).
\]
Since the set $\cup_{x\in\mathcal OK}\{\{x\}\times
O(f'(x),\varepsilon_i)\}$ is open in the product $\mathcal
OK\times Y$ and the mapping $F$ is complete, then $G_i$ is also
complete. Clearly, ${\rm cal} G_2<2\varepsilon_2<\varepsilon$ and
for any point $x\in K$ the set $G_2^\Gamma(x)$ is contained in
$O(\Gamma_\Psi,\varepsilon)$. Now, applying Lemma~\ref{L31} to the
filtration $G_0\subset G_1\subset G_2$, we obtain a compact
approximately connected subfiltration $F_0\subset F_1\subset
F_2\colon \mathcal OK\to Y$. By the choice of $\varepsilon_2$ the
mapping $F_2$ has approximately aspherical point-images. Therefore,
the filtration $F_0\subset F_1\subset F_2$ is approximately
$\infty$-connected. Finally, we put $\Psi'=F_2$.
\end{proof}

\begin{thm} \label{thmapproxsections}
Let $F\colon X\to Y$ be equi locally hereditarily aspherical,
lower 2-continuous complete multivalued mapping
of locally compact $ANR$-space $X$ to Banach space $Y$.
Suppose that a compact submapping $\Psi\colon A\to Y$ of $F|_A$
is defined on compactum $A\subset X$ and admits continuous approximations.
Then for any $\varepsilon>0$ there exists a neighborhood $OA$ of $A$
and a single-valued continuous selection $s\colon OA\to Y$
of $F|_{OA}$ such that $\Gamma_s\subset O(\Gamma_\Psi,\varepsilon)$.
\end{thm}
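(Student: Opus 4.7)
The plan is to iterate Lemma~\ref{lemmashrinking} to produce a sequence of compact submappings $\Psi_n$ of $F$, all defined on a fixed compact neighborhood $U$ of $A$, with gauges tending to zero; the desired selection $s$ is then extracted as the uniform limit of carefully chosen continuous approximations $f_n$ of the $\Psi_n$.

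Using local compactness of $X$, first choose a compact neighborhood $U$ of $A$. Apply Lemma~\ref{lemmashrinking} with a small parameter $\varepsilon_1<\varepsilon/4$ to the given $\Psi$ on $A$; this yields a compact submapping $\Psi_1$ on an open neighborhood $OA_1$ of $A$ with ${\rm cal}\,\Psi_1<\varepsilon_1$, $\Gamma_{\Psi_1}\subset O(\Gamma_\Psi,\varepsilon_1)$, and a compact approximately $\infty$-connected filtration. Shrink $U$ if necessary so that $U\subset OA_1$. Since every open subset of the locally compact ANR $X$ is itself an ANR, Theorem~\ref{thmapprox} ensures $\Psi_1|_U$ admits continuous approximations on the compactum $U$. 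Continuing inductively with parameters $\varepsilon_n\searrow 0$ satisfying $\sum_n\varepsilon_n<\varepsilon/2$, I obtain compact submappings $\Psi_n\colon U\to Y$ with gauges less than $\varepsilon_n$ and $\Gamma_{\Psi_n}\subset O(\Gamma_{\Psi_{n-1}},\varepsilon_n)$. The point is that applying Lemma~\ref{lemmashrinking} with domain $U$ at each stage yields a neighborhood $OA_{n+1}\supset U$, so the entire iteration lives on the fixed compactum $U$ and no domain shrinkage occurs.

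In parallel I would choose continuous approximations $f_n\colon U\to Y$ of $\Psi_n$. Inspection of the proof of Lemma~\ref{lemmashrinking} shows that the new submapping $\Psi_{n+1}(x)$ is constructed inside a ball of radius $r_{n+1}$ around $f_n(x)$, where $r_{n+1}$ may be taken as small as we wish. Using the continuity modulus of the already-fixed $f_n$ on the compactum $U$, I would shrink $r_{n+1}$ and the closeness tolerance $\delta_{n+1}$ (for which $\Gamma_{f_{n+1}}\subset O(\Gamma_{\Psi_{n+1}},\delta_{n+1})$) so tightly that
\[
\sup_{x\in U}\,|f_{n+1}(x)-f_n(x)|<2^{-n}.
\]
Then $\{f_n\}$ is uniformly Cauchy on $U$ and converges to a continuous map $s\colon U\to Y$. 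The inclusion $\Gamma_s\subset O(\Gamma_\Psi,\varepsilon)$ follows by taking $n\to\infty$ in the telescoping bound $\Gamma_{\Psi_n}\subset O(\Gamma_\Psi,\sum_{k\le n}\varepsilon_k)$ combined with the uniform closeness of $f_n$ to $\Gamma_{\Psi_n}$. That $s(x)\in F(x)$ follows from completeness of $F$: each $\Gamma_{\Psi_n}$ lies in $\Gamma_F$, and the compact limit $\Gamma_s$ lies in the $G_\delta$-set $S$ in which $\Gamma_F$ is closed, hence in $\Gamma_F$ itself.

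The main obstacle is orchestrating the parameters $\varepsilon_n$, $\delta_n$, $r_n$ jointly with the continuity moduli of the $f_n$, so that at every stage the approximately $\infty$-connected filtration required to invoke Theorem~\ref{thmapprox} is preserved while the sequence $\{f_n\}$ becomes uniformly Cauchy. Local compactness of $X$ is essential here: it permits fixing the compactum $U$ once and for all, preventing the iteration from shrinking the domain below a neighborhood of $A$.
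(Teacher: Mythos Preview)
Your approach is essentially the paper's: iterate Lemma~\ref{lemmashrinking} over a fixed compact neighborhood of $A$ to produce compact submappings with gauges tending to zero, then pass to a limit. The paper takes the limit of the multivalued maps $\Psi_k$ themselves (the shrinking gauges force the limit to be single-valued and upper semicontinuous), while you interpose single-valued approximations $f_n$ and take their uniform limit; either works and the bookkeeping is comparable.

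There is, however, a real gap in your final step. You assert that ``the compact limit $\Gamma_s$ lies in the $G_\delta$-set $S$ in which $\Gamma_F$ is closed,'' but nothing you have arranged guarantees this: $S$ is only $G_\delta$, not closed, so a limit of points of $\Gamma_F\subset S$ need not land in $S$. Completeness of $F$ says each fiber $\{x\}\times F(x)$ is closed \emph{relative to} $S$; to conclude $s(x)\in F(x)$ you need both that $s(x)$ lies in the $Y$-closure of $F(x)$ (which you do have, since $s(x)$ is approximated by points of $\Psi_n(x)\subset F(x)$) and that $(x,s(x))\in S$ (which you have not secured). The paper handles this explicitly: it writes $S=\bigcap_k G_k$ with each $G_k$ open in $X\times Y$, and at stage $k$ shrinks the neighborhood $\mathcal U_k$ of $\Gamma_{\Psi_k}$ so that $\mathcal U_k\subset \mathcal U_{k-1}\cap G_k$; the nesting then forces $\Gamma_s\subset\bigcap_k G_k=S$. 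You should fold this extra constraint into your parameter orchestration---it costs nothing beyond one more inequality per step, but without it the selection conclusion fails.
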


\begin{proof}
Consider a $G_\delta$-subset $G\subset X\times Y$ such that all
fibers of $F$ are closed in $G$ and fix open sets $G_i\subset
X\times Y$ such that $G=\cap^\infty_{i=1} G_i$. Fix
$\varepsilon>0$ such that $O(\Gamma_\Psi,\varepsilon)\subset G_1$.
By Lemma~\ref{lemmashrinking} there is a neighborhood $U_1$ of $A$
in $X$ and a compact submapping $\Psi_1\colon U_1\to Y$ of
$F|_{U_1}$ such that $\Gamma_{\Psi_1}\subset
O(\Gamma_\Psi,\varepsilon)$, $\Psi_1$ admits a compact
approximately $\infty$-connected filtration, and ${\rm cal}
\Psi_1<\varepsilon$. Since $X$ is locally compact and $A$ is
compact, there exists a compact neighborhood $OA$ of $A$ such that
$OA\subset U_1$. By Theorem~\ref{thmapprox} the mapping
$\Psi_1|_{OA}$ admits continuous approximations. Take
$\varepsilon_1<\varepsilon$ such that the neighborhood $\mathcal
U_1=O(\Gamma_{\Psi_1}(OA),\varepsilon_1)$ lies in
$O(\Gamma_\Psi,\varepsilon)$. Clearly, $\mathcal U_1\subset G_1$.

Now by induction with the use of Lemma~\ref{lemmashrinking}, we
construct a sequence of neighborhoods $U_1\supset U_2\supset
U_3\supset\dots$ of the compactum $OA$, a sequence of compact
submappings $\{\Psi_k\colon U_k\to Y\}^\infty_{k=1}$ of the
mapping $F$, and a sequence of neighborhoods $\mathcal
U_k=O(\Gamma_{\Psi_1}(OA),\varepsilon_k)$ such that for every
$k\ge 2$ we have ${\rm cal}
\Psi_k<\varepsilon_{k-1}/2<\varepsilon/2^k$, and $\mathcal
U_k(OA)$ is contained in $\mathcal U_{k-1}(OA)\cap G_k$. It is not
difficult to choose the neighborhood $\mathcal U_k$ of the graph
$\Gamma_{\Psi_k}$ in such a way that for every point $x\in U_k$
the set $\mathcal U_k(x)$ has diameter less than $\frac{3}{2^k}$.

Then for any $m\ge k\ge 1$ and for any point $x\in OA$ we have
$\Psi_m(x)\subset O(\Psi_k(x),\frac{3}{2^k})$; this implies the
fact that the sequence $\{\Psi_k|_{OA}\}_{k=1}^\infty$ is a Cauchy
sequence. Since $Y$ is complete, there exists the limit $s\colon
OA\to Y$ of this sequence. The mapping $s$ is single-valued by the
condition ${\rm cal}\Psi_k<\frac{1}{2^k}$ and is upper
semicontinuous (and, therefore, is continuous) by the upper
semicontinuity of all the mappings $\Psi_k$. Clearly, for any
$x\in OA$ the point $s(x)$ lies in $G(x)$ and is a limit point of
the set $F(x)$. Since $F(x)$ is closed in $G(x)$, then $s(x)\in
F(x)$, i.e. $s$ is a selection of the mapping $F$.
\end{proof}

\begin{thm}
Let $p\colon E\to B$ be a Serre fibration of locally connected
compacta with all fibers homeomorphic to some fixed
two-dimensional manifold. If $B\in ANR$, then any section of $p$
over closed subset $A\subset B$ can be extended to a section of
$p$ over some neighborhood of $A$.
\end{thm}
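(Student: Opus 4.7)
The plan is to reduce the theorem to a single application of Theorem~\ref{thmapproxsections} by encoding the prescribed section $\sigma\colon A\to E$ directly into a modified multivalued inverse of $p$, so that any selection produced by the theorem automatically extends $\sigma$ rather than merely approximating it.

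Concretely, embed $E$ (a compact metric space) into a Banach space $Y$ and define $F\colon B\to Y$ by
\[
F(b)=\begin{cases} p^{-1}(b), & b\in B\setminus A,\\ \{\sigma(b)\}, & b\in A.\end{cases}
\]
Any single-valued continuous selection $s\colon OA\to Y$ of $F$ over a neighborhood $OA$ of $A$ is then automatically a section of $p$ (since $F(b)\subset p^{-1}(b)$ everywhere) that coincides with $\sigma$ on $A$ (since $F(a)=\{\sigma(a)\}$ forces $s(a)=\sigma(a)$). It therefore suffices to verify the hypotheses of Theorem~\ref{thmapproxsections} for $F$ and to note that $\Psi=\sigma$, viewed as a compact submapping of $F|_A$, trivially admits continuous approximations (by $\sigma$ itself, since $\sigma$ is already single-valued and continuous, with compact point-images and upper semicontinuous).

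The base $B$ is a locally compact separable metric $ANR$; $F$ is complete with witness $G=B\times E$, which is closed (hence $G_\delta$) in $B\times Y$ with each slice $\{b\}\times F(b)$ closed in it. For equi local hereditary asphericity and lower $2$-continuity of $F$ at points $b\in B\setminus A$ there is nothing to do: the assertions follow from Lemma~\ref{lemma2fibrprop} applied to $p^{-1}$ on the open set $B\setminus A$. At a point $b\in A$ with $e=\sigma(b)$ and a prescribed neighborhood $V$ of $e$, first apply Lemma~\ref{lemma2fibrprop} to choose neighborhoods $W\subset V$ of $e$ in $Y$ and $U_1$ of $b$ in $B$ such that $W\cap p^{-1}(b')$ is hereditarily aspherical and the pair $(W\cap p^{-1}(b'),\,V\cap p^{-1}(b'))$ is polyhedrally $2$-connected for every $b'\in U_1$, and then shrink $U_1$ to $U$ using continuity of $\sigma$ on $A$ so that $\sigma(U\cap A)\subset W$. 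For $b'\in U\cap A$ the sets $W\cap F(b')$ and $V\cap F(b')$ both reduce to the singleton $\{\sigma(b')\}$, which is trivially hereditarily aspherical and forms a trivially $2$-connected pair; for $b'\in U\setminus A$ the required properties hold by the choice of $W$ and $U_1$.

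With these verifications in place, Theorem~\ref{thmapproxsections} applied to $F$ and $\Psi=\sigma$ produces the neighborhood $OA$ and a selection $s$ of $F|_{OA}$, which by the discussion above is the desired section extending $\sigma$. I expect the main technical obstacle to be precisely the local check at points of $A$: the pointlike fibers of $F$ there must glue coherently to the surrounding full $2$-dimensional fibers, and the argument only succeeds because $W$ is chosen first from the properties of $p^{-1}$ afforded by Lemma~\ref{lemma2fibrprop} and only afterwards $U$ is shrunk, using continuity of $\sigma$, to constrain the values of $\sigma$ on $A$ into $W$.
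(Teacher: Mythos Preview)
Your proposal is correct and follows essentially the same approach as the paper: define the hybrid multivalued map $F$ that equals $\sigma$ on $A$ and $p^{-1}$ off $A$, verify the hypotheses of Theorem~\ref{thmapproxsections}, and apply it with $\Psi=\sigma$. In fact you are more careful than the paper, which simply cites Lemma~\ref{lemma2fibrprop} for the modified $F$ without spelling out the verification at points of $A$; your two-step argument (choose $W,U_1$ from the lemma for $p^{-1}$, then shrink $U_1$ to $U$ via continuity of $\sigma$) is exactly the missing detail.
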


\begin{proof}
Let $s\colon A\to E$ be a section of $p$ over $A$.
Embed $E$ into Hilbert space $l_2$ and consider a multivalued mapping
$F\colon B\to l_2$ defined as follows:
$$
   F(b)=\begin{cases} s(b), &\text{if \;$b\in A$}\\
               p^{-1}(b),   &\text{if \;$x\in B\setminus
               A$.}\end{cases}
$$
Since every fiber $p^{-1}(b)$ is compact, the mapping $F$ is complete.
By Lemma~\ref{lemma2fibrprop} the mapping $F$ is equi
locally hereditarily aspherical and lower $2$-continuous.
We can apply Theorem~\ref{thmapproxsections} to the mapping $F$
and its submapping $s$ to find a single-valued continuous selection
$\widetilde s\colon OA\to l_2$ of $F|_{OA}$.
By definition of $F$, we have $\widetilde s|_{A}=F|_{A}=s$.
Clearly, $\widetilde s$ defines a section of the fibration $p$
over $OA$ extending $s$.
\end{proof}

\begin{defin}
For a mapping $p\colon E\to B$ we say that $s\colon B\to E$ is
{\it $\varepsilon$-section} if the map $p\circ s$ is
$\varepsilon$-close to the identity ${\rm id}_B$.
\end{defin}

The following proposition easily follows from Theorem~4.1 of the paper~\cite{Mi}.

\begin{pro} \label{profibr}
If $p\colon E\to B$ is a locally trivial fibration of finite-dimensional
compacta with locally contractible fiber, then there is $\varepsilon>0$ such that an
existence of $\varepsilon$-section for $p$ implies an existence of a section for $p$.
\end{pro}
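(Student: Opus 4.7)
The plan is to recognise sections of $p$ as continuous single-valued selections of the compact-valued multifunction $\phi\colon B\to E$ defined by $\phi(b)=p^{-1}(b)$, and to apply Michael's approximate selection theorem (Theorem~4.1 of~\cite{Mi}) to the given $\varepsilon$-section $s$, viewed as an approximate selection of $\phi$.

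First I would verify the hypotheses on $\phi$. By local triviality of $p$ the multifunction $\phi$ is in fact (lower semi)continuous: over a trivialising open set $V\subset B$ with $p^{-1}(V)\cong V\times F$, it is just $b\mapsto\{b\}\times F$. Its values $p^{-1}(b)\cong F$ are compact and locally contractible, and since $F$ is finite dimensional it is an ANR, hence $LC^{\infty}$ and $C^{\infty}$. The base $B$ is a finite-dimensional compactum by hypothesis. Under these standing hypotheses Theorem~4.1 of~\cite{Mi} yields a constant $\delta>0$ such that every continuous $\delta$-selection $s\colon B\to E$ of $\phi$ (that is, every continuous $s$ whose graph lies in the $\delta$-neighbourhood of $\Gamma_\phi$ in $B\times E$) admits a genuine continuous selection $\widetilde s$ of $\phi$ uniformly close to $s$.

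Next I would translate between the $\varepsilon$-section language of the proposition and the $\delta$-selection language used by Michael. Fix a finite trivialising cover $\{(V_\alpha,\varphi_\alpha)\}$ of $p$, let $\lambda>0$ be a Lebesgue number for this cover, and let $\omega$ be a common modulus of uniform continuity for the finitely many inverse trivialisations $\varphi_\alpha^{-1}$. Choose $\varepsilon=\min\{\lambda,\omega(\delta)\}$. For any $\varepsilon$-section $s$ and any $b\in B$, the points $b$ and $p(s(b))$ then lie in some common $V_\alpha$; pushing $s(b)$ to the fibre over $b$ through $\varphi_\alpha$ produces a point of $\phi(b)$ at distance at most $\delta$ from $s(b)$, showing that $s$ is a continuous $\delta$-selection of $\phi$. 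Michael's theorem then supplies a continuous selection $\widetilde s\colon B\to E$ of $\phi$; since $\widetilde s(b)\in p^{-1}(b)$ for every $b$, the map $\widetilde s$ is automatically a section of $p$.

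The only real obstacle is a bookkeeping one: confirming that the approximate-selection hypothesis of~\cite{Mi} applies to $\phi=p^{-1}$ with the particular metric on $E$ in which "$\varepsilon$-section" is measured. No deep idea is needed beyond local triviality and Michael's theorem, and the calibration of $\varepsilon$ against the moduli of continuity of the finitely many trivialisations fixed in step one is completely routine; this is the sense in which the proposition "easily follows" from Michael's result.
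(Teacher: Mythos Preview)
Your approach is exactly the one the paper intends: the paper gives no proof beyond the remark that the proposition ``easily follows from Theorem~4.1 of~\cite{Mi}'', and your proposal is a correct fleshing-out of that citation via $\phi=p^{-1}$ together with the routine calibration of $\varepsilon$ against the finitely many trivialisations. One small slip to fix: a finite-dimensional locally contractible compactum is an ANR and hence $LC^{\infty}$, but certainly need not be $C^{\infty}$ (take $F=S^1$); this is harmless here, since the $C^n$ hypothesis in Michael's theorem is only needed to manufacture a selection from nothing, whereas the $\delta$-selection you produce from the $\varepsilon$-section already supplies the requisite starting data, and the equi-$LC^n$ property alone suffices to straighten it into a genuine selection.
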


We will use the following two propositions in the proof of
existence of global sections in Serre fibrations. For the
definition and basic properties of Menger manifolds we refer the
reader to~\cite{bestvina}. Proofs of these two propositions
follow from Bestvina's construction of Menger
manifold~\cite{bestvina} and Dranishnikov's triangulation theorem
for Menger manifolds~\cite{Dr2}.

\begin{pro} \label{propoly}
Let $X$ be a compact 2-dimensional Menger manifold.
For any $\varepsilon>0$ there exist a finite polyhedron $P$ and maps $g\colon X\to P$
and $h\colon P\to X$ such that $h\circ g$ is $\varepsilon$-close to the identity.
If $\pi_1(X)=0$, then we may choose $P$ with $\pi_1(P)=0$.
\end{pro}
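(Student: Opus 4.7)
The plan is to combine Bestvina's inverse-limit construction of Menger manifolds with Dranishnikov's triangulation theorem. Bestvina's characterization produces an approximation of $X$ by finite polyhedra via $\varepsilon$-maps in one direction; the triangulation theorem produces the approximation in the opposite direction. Taken together they give exactly the ``near-polyhedron'' conclusion required.

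\textbf{Step 1: Obtain $g$ from Bestvina's construction.} Every compact $2$-dimensional Menger manifold $X$ is homeomorphic to an inverse limit $\varprojlim(P_k,f_k)$, where the $P_k$ are finite $2$-dimensional polyhedra and the bonding maps $f_k\colon P_{k+1}\to P_k$ are $UV^1$. The projections $\pi_k\colon X\to P_k$ form a sequence of $\varepsilon_k$-maps with $\varepsilon_k\to 0$. Choose $k$ large enough that $\pi_k$ is an $(\varepsilon/3)$-map, and set $P:=P_k$, $g:=\pi_k$.

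\textbf{Step 2: Obtain $h$ from the triangulation theorem.} Dranishnikov's triangulation theorem says that every $n$-dimensional Menger manifold carries a ``Menger triangulation'' compatible with its polyhedral approximations; concretely, for a fine enough $k$ the projection $\pi_k$ admits an approximate cross-section $h\colon P_k\to X$. One constructs $h$ simplex-by-simplex on $P_k$, lifting skeleta through the tower using the $UV^1$-property of the bonding maps (which permits extension of partial lifts from a subpolyhedron over the next skeleton with arbitrarily small error) and convergence of the resulting Cauchy sequence of partial lifts in the complete metric space $X$. The control on mesh through the tower makes $h\circ g$ a map that is $\varepsilon$-close to $\mathrm{id}_X$.

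\textbf{Step 3: The simply connected case.} In Bestvina's construction the polyhedra $P_k$ and the bonding maps $f_k$ can be chosen so that $f_k$ induces an isomorphism $\pi_1(P_{k+1})\to\pi_1(P_k)$ (since $f_k$ is $UV^1$). Hence $\pi_1(P_k)\cong\check{\pi}_1(X)=\pi_1(X)$ (the last equality using $LC^1$ of $X$, which is automatic for a $2$-dimensional Menger manifold). If $\pi_1(X)=0$, then $\pi_1(P_k)=0$ for every sufficiently large $k$, and the $P$ produced in Step~1 is simply connected.

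\textbf{Main obstacle.} The serious work is in Step~2: producing an honest map $h\colon P\to X$ that is an approximate right inverse of $g$, not merely the ``multivalued'' right inverse $g^{-1}$. An $\varepsilon$-map $g$ alone does not supply such an $h$; one needs the internal regularity of Menger manifolds (the disjoint disks properties and stability of $UV^{n-1}$-maps built into Bestvina's characterization) to guarantee that the skeleton-by-skeleton lifting procedure closes up into a continuous section with controlled error. This is exactly the content that the cited works of Bestvina and Dranishnikov supply, and is the step where the compactness of $X$ and the finiteness of $P$ are crucially used.
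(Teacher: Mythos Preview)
Your proposal is correct and follows the same route the paper indicates: the paper does not give a detailed argument for this proposition but simply states that it ``follow[s] from Bestvina's construction of Menger manifold and Dranishnikov's triangulation theorem for Menger manifolds,'' which is precisely the combination you invoke. Your sketch supplies more detail than the paper itself, and the attribution of which cited result yields $g$ versus $h$ could be argued either way (Dranishnikov's triangulation theorem is often viewed as giving the inverse-limit/polyhedral representation for general Menger manifolds, with Bestvina's $UV^{n-1}$ lifting machinery then producing the approximate section), but the substance matches.
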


\begin{pro} \label{pro3poly}
Let $X$ be a compact 3-dimensional Menger manifold with $\pi_1(X)=\pi_2(X)=0$.
For any $\varepsilon>0$ there exist a finite polyhedron $P$ with
$\pi_1(P)=\pi_2(P)=0$ and maps $g\colon X\to P$
and $h\colon P\to X$ such that $h\circ g$ is $\varepsilon$-close to the identity.
\end{pro}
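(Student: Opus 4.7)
The plan is to combine Bestvina's inverse-limit construction of Menger manifolds with Dranishnikov's triangulation theorem, which together realize $X$ as a fine $UV^2$-approximation of a finite polyhedron.

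More concretely, given $\varepsilon>0$ I would first invoke these two results to produce a finite polyhedron $P$ of dimension at most $3$ together with a $UV^2$-surjection $g\colon X\to P$ whose point-inverses all have diameter less than some prescribed $\delta<\varepsilon/2$. (Bestvina's construction represents every $3$-dimensional Menger manifold as an inverse limit of polyhedra with $UV^2$-bonding maps, and Dranishnikov's triangulation theorem guarantees that any given compact Menger manifold admits such a representation; choosing a level of the inverse system far enough along the tower provides $P$ and $g$.) A $UV^2$-map between finite-dimensional compacta induces isomorphisms on $\pi_i$ for $i\le 2$, so the hypotheses $\pi_1(X)=\pi_2(X)=0$ immediately force $\pi_1(P)=\pi_2(P)=0$, which settles the homotopical requirement on the polyhedron.

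Next I would build $h\colon P\to X$ as an approximate section of $g$. Fix a triangulation of $P$ whose simplices all have diameter less than $\varepsilon/2$, and define $h$ by induction on skeleta: on the $0$-skeleton pick any preimage under $g$; assuming $h$ is defined on the $k$-skeleton with $g\circ h$ close to the inclusion, extend across each $(k{+}1)$-cell using the $UV^2$-property of $g$, which supplies approximate null-homotopies for obstruction spheres of dimension $\le 2$ inside prescribed neighborhoods of the relevant fibers. The induction terminates at $k=3$ since $\dim P\le 3$. Arranging the nested neighborhoods at each step so that $g\circ h$ is $(\varepsilon/2)$-close to $\mathrm{id}_P$, and combining this with the fact that every fiber of $g$ has diameter less than $\delta<\varepsilon/2$, yields $h\circ g$ being $\varepsilon$-close to $\mathrm{id}_X$, as required.

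The main obstacle is the final inductive step, namely the extension of $h$ across the $3$-cells of $P$: here the obstruction lies in $\pi_2$ of a fiber of $g$, and because the $UV^2$-condition is of an approximate rather than exact nature, one must work within carefully chosen nested neighborhoods of the fibers. Once the bookkeeping of these neighborhoods is arranged—in the spirit of the approximation arguments carried out in Section~\ref{S:approx}—the section $h$ and the required closeness of $h\circ g$ to $\mathrm{id}_X$ both fall out.
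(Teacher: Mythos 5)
The paper offers no proof of this proposition beyond the remark that it ``follows from Bestvina's construction of Menger manifolds and Dranishnikov's triangulation theorem,'' which are precisely the two ingredients you invoke, so your argument is essentially the paper's own approach, unpacked. The one genuinely delicate point---the skeleton-wise construction of $h$ over the $3$-cells of $P$ using the approximate $UV^2$ property of the fibers of $g$, together with uniform upper semicontinuity of $g^{-1}$ to convert closeness of $g\circ h$ to $\mathrm{id}_P$ into closeness of $h\circ g$ to $\mathrm{id}_X$---is correctly identified and treated at (at least) the level of detail the paper itself supplies.
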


\begin{thm} \label{glob2sec}
Let $p\colon E\to B$ be a Serre fibration of locally connected
compactum $E$ onto an $ANR$-compactum $B$ with all fibers
homeomorphic to a given two-dimensional manifold $M$. If $M$ is
not homeomorphic to the sphere or the projective plane, then $p$
admits a global section provided one of the following conditions
holds:
\begin{itemize}
\item[(a)] $\pi_1(M)$ is abelian and $H^2(B;\pi_1(F_b))=0$
\item[(b)] $\pi_1(M)$ is non-abelian, $M$ is not homeomorphic to the Klein bottle
   and $\pi_1(B)=0$.
\item[(c)] $M$ is homeomorphic to the Klein bottle and $\pi_1(B)=\pi_2(B)=0$.
\end{itemize}
\end{thm}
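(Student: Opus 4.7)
The strategy is to reduce to a polyhedral base by Menger-manifold approximation, build a section there by obstruction theory (using Theorem~4.4 at each skeletal extension), and then transfer back to $B$. The crucial geometric input is that, since $M\neq S^2,\mathbb{RP}^2$, the universal cover of $M$ is $\mathbb{R}^2$, so $M$ is aspherical and $\pi_i(M)=0$ for all $i\ge 2$. Consequently, all potentially non-trivial obstructions to constructing a section live in (possibly twisted) second cohomology of the base with coefficients in $\pi_1(M)$.

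I would begin by approximating $B$ by a finite polyhedron. Embed $B$ as a controlled retract in a compact Menger manifold $\mu^k$ of dimension $k=2$ in cases (a), (b) and $k=3$ in case (c); the extra dimension in (c) is what the $\pi_2$ hypothesis is there to accommodate. Applying Proposition~\ref{propoly} or Proposition~\ref{pro3poly} yields a finite polyhedron $P$ together with maps $g\colon B\to P$ and $h\colon P\to B$ such that $h\circ g$ is arbitrarily close to $\mathrm{id}_B$; in case (b) we may arrange $\pi_1(P)=0$, and in case (c) also $\pi_2(P)=0$. Pulling back $p$ along $h$ produces a Serre fibration $p_P\colon E_P\to P$ with fibers still homeomorphic to $M$.

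On $P$ I would construct a section $\tilde s$ by skeletal induction. Over the $0$- and $1$-skeleta the extension is routine since fibers are path-connected and Theorem~4.4 allows extension across closed arcs. The primary obstruction appears on the $2$-cells with values in $\pi_1(M)$: in case (a), abelianity of $\pi_1(M)$ reduces the obstruction to an ordinary cohomology class in $H^2(P;\pi_1(M))$, which corresponds via $h$ to the vanishing class $H^2(B;\pi_1(F_b))=0$; in case (b), simple connectivity of $P$ trivializes the non-abelian primary obstruction; in case (c), the additional vanishing $\pi_2(P)=0$ kills a secondary obstruction specific to the Klein bottle, whose fundamental group has a non-trivial commutator structure. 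Above dimension $2$, all obstructions vanish since $\pi_k(M)=0$. After the section is defined on a closed subcomplex, Theorem~4.4 extends it to an open neighborhood, and a simplicial subdivision argument then pushes the extension out to the next skeleton.

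Finally, transferring $\tilde s$ to $B$ via composition produces an $\varepsilon$-section of $p$, which must be upgraded to an honest section. Proposition~\ref{profibr} does not apply directly since $p$ is merely a Serre fibration rather than locally trivial, so the upgrade would proceed through the inverse-multifunction iteration of Theorem~\ref{thmapproxsections}: the multifunction $p^{-1}$ is equi locally hereditarily aspherical and lower $2$-continuous by Lemma~\ref{lemma2fibrprop}, and the approximate section furnishes the continuous approximation of a compact submapping demanded by Lemma~\ref{lemmashrinking}. The main obstacle I expect is case (b): making non-abelian obstruction theory over a simply connected polyhedron fully rigorous, and in particular verifying that the classifying-space lift for fibers with non-abelian $\pi_1$ (such as higher-genus surfaces) genuinely exists rather than merely abelianizes to zero.
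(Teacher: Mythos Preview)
Your plan has the right flavor, but two structural pieces are misplaced and the route in cases (b) and (c) has a real gap.

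First, the Menger step runs in the wrong direction. You propose to \emph{embed} $B$ in a Menger $k$-manifold with $k=2$ or $3$, but $B$ is an arbitrary ANR-compactum and may have dimension far exceeding $3$, so no such embedding exists. The paper instead takes Dranishnikov's $UV^{k-1}$-\emph{surjection} $\mu\colon L\to B$ from a Menger $k$-manifold onto $B$. That direction matters twice over. Because $\dim L<\infty$, the pullback $p_L=\mu^*(p)$ is \emph{locally trivial} by the Hamstrom--Dyer theorem, so Proposition~\ref{profibr} legitimately applies over $L$ and the further pullback $p_P=h^*(p_L)$ is a genuine fiber bundle, not merely a Serre fibration. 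And because the point-inverses $\mu^{-1}(b)$ are $UV^{k-1}$-compacta, the paper can build the compact singular filtration $F_0=F_1=\mu^{-1}$, $F_2=p^{-1}$, with the fiberwise transformation $T_1$ furnished by the lift $T\colon L\to E$ of $\mu$ coming from a section $s_L$. This is exactly the object the machinery of Section~\ref{S:approx} was designed to consume; once it is in hand, Theorem~\ref{thmapproxsections} produces the global section directly. Your ``$\varepsilon$-section then upgrade'' shortcut never constructs this filtration.

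Second, your worry about non-abelian obstruction theory in case (b) is well-founded, and the paper sidesteps it entirely. Once $p_P$ is locally trivial and $P$ is simply connected of dimension $2$ (respectively $\pi_1(P)=\pi_2(P)=0$ and $\dim P=3$), $P$ has the homotopy type of a bouquet of $2$-spheres (respectively $3$-spheres), and one only has to section a locally trivial $M$-bundle over each sphere. That reduces to the homotopy of $\mathrm{Homeo}(M)$: for $M$ with non-abelian $\pi_1(M)$ and $M\ne K^2$ the identity component of $\mathrm{Homeo}(M)$ is simply connected, so every such bundle over $S^2$ is trivial; for the Klein bottle one uses $\pi_2(\mathrm{Homeo}(K^2))=0$ over $S^3$. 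None of this argument is available if $p_P$ is only a Serre fibration, which is all your direct pullback $h^*(p)$ along $h\colon P\to B$ yields. Your skeletal use of Theorem~4.4 does not fill this hole either: that theorem extends a given section over a \emph{neighborhood} of a closed set, and the failure of such local extensions to patch is precisely the unkilled obstruction cocycle.
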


\begin{proof}
Embed $E$ into the Hilbert space $l_2$ and consider a multivalued mapping
$F\colon B\to l_2$ defined as $F=p^{-1}$.
Since every fiber $p^{-1}(b)$ is compact, the mapping $F$ is complete.
It follows from Lemma~\ref{lemma2fibrprop} that the mapping $F$ is
equi locally hereditarily aspherical and lower $2$-continuous.

Now we show that $F$ admits a compact singular approximately
$\infty$-connected filtration.
In cases (a) and (b) there exists $UV^1$-mapping $\mu$
of Menger 2-dimensional manifold $L$
onto $B$~\cite{Dr1}. Note that $\pi_1(L)=0$ if $\pi_1(B)=0$.
In case (c) we consider $UV^2$-mapping $\mu$
of Menger 3-dimensional manifold $L$ onto $B$~\cite{Dr1};
note that $\pi_1(L)=\pi_2(L)=0$ if $\pi_1(B)=\pi_2(B)=0$.
Since $\dim L<\infty$, the induced fibration $p_L=\mu^*(p)\colon E_L\to L$
is locally trivial~\cite{HD}.
By Proposition~\ref{profibr} there is $\varepsilon>0$ such that an
existence of $\varepsilon$-section for $p_L$ implies an
existence of a section for $p_L$.
In cases (a) and (b), by Proposition~\ref{propoly}, there exist a 2-dimensional
finite polyhedron $P$ and continuous maps $g\colon L\to P$
and $h\colon P\to L$ such that $h\circ g$ is $\varepsilon$-close
to the identity (we assume $\pi_1(P)=0$ in case $\pi_1(B)=0$).
In case (c) by Proposition~\ref{pro3poly} there exist a 3-dimensional
finite polyhedron $P$ with $\pi_1(P)=\pi_2(P)=0$
and continuous maps $g\colon L\to P$ and $h\colon P\to L$
such that $h\circ g$ is $\varepsilon$-close to the identity.

Consider a locally trivial fibration $p_P=h^*(p_L)\colon E_P\to P$.

\noindent {\bf Claim.} \ {\it The fibration $p_P$ has a section
$s_P$.}
\begin{proof}
(a) If $\pi_1(M)$ is abelian and
$H^2(B;\pi_1(F_b))=0=H^2(P;\pi_1(F_b))$, the fibration $p_P$ has a
section $s_P$~\cite{W}.

(b) Since $\pi_1(P)=0$ and $\dim P=2$, then $P$ is homotopy
equivalent to a bouquet of 2-spheres $\Omega=\vee_{i=1}^m S^2_i$.
Let $\psi\colon P\to\Omega$ and $\phi\colon\Omega\to P$ be maps
such that $\phi\circ \psi$ is homotopic to the identity ${\rm
id}_P$. The locally trivial fibration over a bouquet
$p_\Omega=\phi^*(p_P)\colon E_\Omega\to\Omega$ has a global
section if and only if it has a section over every sphere of the
bouquet. If the fiber $M$ has non-abelian fundamental group and is
not homeomorphic to Klein bottle, then the space of
autohomeomorphisms ${\rm Homeo}(M)$ has simply connected identity
component~\cite{BH} and therefore any locally trivial fibration
over 2-sphere with fiber homeomorphic to $M$ has a section (in
fact, this fibration is trivial). Hence, the fibration $p_\Omega$
has a section $s_\Omega$. This section defines a lifting of the
map $\phi\circ\psi\colon P\to P$ with respect to $p_P$. Since
$p_P$ is a Serre fibration and $\phi\circ\psi$ is homotopic to the
identity, the identity mapping ${\rm id}_P$ has a lifting
$s_P\colon P\to E_P$ with respect to $p_P$ which is simply a
section of $p_P$.

(c) Since $\pi_1(P)=\pi_2(P)=0$ and $\dim P=3$, then $P$ is
homotopy equivalent to a bouquet of 3-spheres $\Omega=\vee_{i=1}^m
S^3_i$. Let $\psi\colon P\to\Omega$ and $\phi\colon\Omega\to P$ be
maps such that $\phi\circ \psi$ is homotopic to the identity ${\rm
id}_P$. The locally trivial fibration over the bouquet
$p_\Omega=\phi^*(p_P)\colon E_\Omega\to\Omega$ has a global
section if and only if it has a section over every sphere of the
bouquet. Since the space of autohomeomorphisms of the Klein bottle
${\rm Homeo}(K^2)$ has $\pi_2({\rm Homeo}(K^2))=0$~\cite{BH}, any
locally trivial fibration over 3-sphere with fiber homeomorphic to
$K^2$ has a section (in fact, this fibration is trivial). Hence,
the fibration $p_\Omega$ has a section $s_\Omega$. This section
defines a lifting of the map $\phi\circ\psi\colon P\to P$ with
respect to $p_P$. Since $p_P$ is a Serre fibration and
$\phi\circ\psi$ is homotopic to the identity, the identity mapping
${\rm id}_P$ has a lifting $s_P\colon P\to E_P$ with respect to
$p_P$ which is simply a section of $p_P$.
%\end{itemize}
\end{proof}

By the construction of $P$ the section $s_P$ defines an
$\varepsilon$-section for $p_L$. Therefore, $p_L$ has a section $s_L$.
Clearly, $s_L$ defines a lifting $T\colon L\to E$
of $\mu$ with respect to $p$.
Finally, we define compact singular filtration
$\mathcal F=\{(F_i,T_i)\}_{i=0}^2$ of $F$ as follows:
\[  F_0=F_1=\mu^{-1}\colon B\to L,\qquad F_2=F,
\qquad T_i={\rm id} \text{ for } i=0  \] and $T_1$ is defined
fiberwise by $T_1(x)=T|_{\mu^{-1}(x)}\colon \mu^{-1}(x)\to F(x)$.
The filtration $\mathcal F$ is approximately connected since for
$i=0,1$ any compactum $F_i(x)$ is $UV^1$. And $\mathcal F$ is
approximately $\infty$-connected since every compactum $F(x)$ is
an aspherical 2-manifold (and therefore is approximately
$n$-aspherical for all $n\ge 2$).

Now we can apply Theorem~\ref{thmapproxsections} to the mapping $F$
to find a single-valued continuous selection $s\colon B\to l_2$ of $F$.
Clearly, $s$ defines a section of the fibration $p$.
\end{proof}

The following Remark explains the appearence of the condition (c)
in Theorem~\ref{glob2sec}.

\begin{rem}
There exists a locally trivial fibration over 2-sphere with fibers
homeomorphic to Klein bottle having no global section.
\end{rem}

\section{Acknowledgements}

Authors wish to express their sincere thanks to
P.~Akhmetiev, R.J.~Daverman, B.~Hajduk and T.~Yagasaki
for helpful discussions during the development of this work.

\end{document}